\newtheorem{lemma}{Lemma}[section]
\newtheorem{theo}[lemma]{Theorem}
\begin{document}
\title[A new class of critical solutions for 1D cubic NLS]{A new class of critical solutions for 1D cubic NLS}
\author[A. Gu\'erin]{Anatole Gu\'erin}
    \address[A. Gu\'erin]{\'Ecole Normale Sup\'erieure Paris-Saclay (Centre Borelli), 4 avenue des Sciences, 91190 Gif-sur-Yvette, France} 
\email{anatole.guerin@ens-paris-saclay.fr}
\date\today
\maketitle

\begin{abstract}
The aim of this article is to prove the existence of a new class of solutions of 1D cubic NLS with an initial data related to a sum of Dirac masses, of critical regularity $\mathcal F (L^\infty)$, and belonging to $\dot H^s$ for any $s<-\frac12$. This problem is motivated by the lack of result for critical regularity initial condition. Our result is based on a scattering approach, after performing a pseudo-conformal transformation, and on fine estimations of oscillatory integrals. 
\end{abstract}

\section{Introduction}

In this article, we consider the cubic nonlinear Schrödinger equation on $\mathbb R$:
\begin{equation}\label{eq}
i\partial_t u+\partial_x^2 u \pm  |u|^2u=0.
\tag{NLS}
\end{equation}
This equation has been largely studied from theoretical and applied points of view, and appears in several areas of physics such as optics and plasma. \par
In the following, all the results will be valid for both focussing and defocussing cases. For simplicity we shall consider the focussing case.\par
Let us first recall the local well-posedness results in Sobolev spaces. It has been proven in \cite{ginibre1979class} and \cite{cazenave1990cauchy} that the equation is well-posed in $H^s$ for any $s\geq 0$. However, it is no longer the case when $s<0$. For exemple, in \cite{christ2003asymptotics} a norm inflation phenomena is pointed out. \par
Then, the critical Sobolev space associated with the scaling invariance $u_\lambda(t,x):=\lambda u(\lambda^2t,\lambda x)$ is $\dot H^{-\frac12}$. For $s\leq -\frac12$, it has been proven in \cite{carles2017norm} and \cite{kishimoto2009wellposed} that norm inflation happens along with a loss of regularity, and in \cite{oh2017remark} that we have in fact a norm inflating phenomena around any data. For $-\frac 12<s<0$, the control of Sobolev norms of Schwartz solutions on the torus and the line is proven in \cite{killip2018low} and \cite{koch2018conserved}. More recently, global well-posedness has been proven in \cite{harrop2020} for any $s>-\frac 12$.\par
On the other hand, the critical Fourier-Lebesgue space associated to this equation is $\mathcal F(L^\infty)$, i.e. Fourier transform in $L^\infty$. The well-posedness has been proven for initial data with Fourier transform in $L^p$ for $p<+\infty$ in \cite{vargas2001global},\cite{grunrock2005bi} and \cite{christ2007power}. \par
One would like to consider the initial value problem with data a sum of Dirac masses. Unfortunately, Dirac masses are in $\mathcal F (L^\infty)$ and borderline in $\dot H^{-\frac12}$,  and the problem is ill-posed for data $u_0=\alpha\delta_0$. Indeed,  in \cite{kenig2001ill} the authors proved, using Galilean invariance, that there is either no weak solution or more than one to that Cauchy problem. More precisely, assuming uniqueness, the solution for $t>0$ of \eqref{eq} is:
\begin{equation}\label{ua}
u_\alpha(t,x)= \alpha   \frac{e^{i\frac{x^2}{4t}}}{\sqrt t} e^{- i\alpha^2\ln t},
\end{equation}
but does not converges towards $u_0$ as $t$ goes to zero. \par
However, this issue can be bypassed by a change of phase, which shows that 
\[
\psi_\alpha(t,x)= \alpha  \frac{e^{i\frac{x^2}{4t}}}{\sqrt t}
\]
 is the solution of the renormalized equation (see \eqref{schro}) with initial condition $\alpha \delta_0$. As a matter of fact, similar problems have also been treated using renormalisation in context of Gibbs measures, such as in \cite{lebo1988} or \cite{bourgain94periodic}. Once this obstruction has been identified, Banica and Vega constructed in \cite{BV1} and \cite{banica2012scattering} solutions of \eqref{eq} that are smoother perturbations of $u_\alpha$.\par
Now, we shall recall the results related to a sum of several Dirac masses. Let $q>\frac12$, let $(\alpha_j)_{j\in \mathbb Z} \in l^{2,q} (\mathbb C)$ \footnote{we define the space $l^{2,q}$ with $\|\alpha_j\|_{l^{2,q}}^2 = \displaystyle \sum_{j\in\mathbb Z}(1+|j|)^{2q}|\alpha_j|^2 $.}, and set:
\begin{equation}
M=\displaystyle\sum_{j\in\mathbb Z}|\alpha_j|^2.
\end{equation}
It has been proven in Theorem 1.3 of \cite{BV5} that there exists $T>0$, depending on $\|\alpha_j\|_{l^{2,q}}$, and a unique solution of \eqref{eq} on $(0,T]$ of the form 
\begin{equation} \label{solA}
u_{\{ \alpha_j\} } (t,x)=  e^{-2iM\ln t}\sum_{j\in\mathbb Z} A_j(t) \frac{e^{i\frac{(x-j)^2}{4t}}}{\sqrt t},
\end{equation}
where 
\[
 A_j(t)=e^{- i|\alpha_j|^2 \ln t}(\alpha_j+{R_j}( t)),
 \] 
with $(R_j)_{j\in\mathbb Z}$ a family of functions such that  for any $0<\gamma<1$:
\begin{equation}\label{borneR}
\displaystyle\sup_{0<\tau<T} \tau^{-\gamma}\|R_j(\tau)\|_{l^{2,q}}< C(T, \|\alpha_j\|_{l^{2,q}}).
\end{equation}
For further purposes, we note that the pseudo-conformal transformation 
\[
\mathcal T(f)(t,x)=\frac{e^{i\frac{x^2}{4t}}}{\sqrt t}\overline{f}\left(\frac1t,\frac xt\right),
\]
sends $u_{\{\alpha_j\}}$ to the periodic solution 
\begin{equation}
A(t,x) =  \sum_{j\in\mathbb Z} e^{- i|\alpha_j|^2\ln t }(\overline{\alpha_j}+\overline{R_j}(\frac1t))e^{-i\frac{tj^2}4+i\frac{xj}2}
\label{eqA}
\end{equation}
of 
\[
i\partial_tw+\partial_x w + \frac1t|w|^2w = 0 \quad \text{ on } [1/T,\infty).
\]
In this article, we construct in Theorem \ref{stab} new solutions of \eqref{eq} as large perturbations of the particular one given by \eqref{solA}, by proving a scattering result on the line for $w-A$.
\begin{theo} \label{stab} 
Let $s\in\mathbb N^*$, $(\alpha_j)\in l^{2,q}$ with $q-s>\frac12$, $ u_+ \in H^s\cap \dot H^{-2} \cap W^{1,s}$  such that 
\begin{equation}
\forall p\in\mathbb Z\quad\forall k\leq s \quad  \frac{\widehat{u_+}(\cdot)}{\cdot + p/2} \in H^k(\mathbb R).
\label{fracu}
\end{equation}
Let $u_{\{\alpha_j\}}$ the solution of \eqref{eq} defined in \eqref{solA} on $[0,T]$.
Then, if $\|\alpha_j\|_{l^{2,q}}$ is small enough, there exists $T_1<T$ depending on $\|\alpha_j\|_{l^{2,q}}$ and $u_+$, and there exists $u$ a unique solution of \eqref{eq} on $(0,T_1]$, such that:
\[
u-u_{\{\alpha_j\} } -e^{-2iM\ln t} \mathcal T \left(e^{it\partial_x^2}u_+(x)\right)\in \mathcal C((0, T_1],L^2(\mathbb R)),
\]
with the following convergence rate as $t$ goes to zero: 
\begin{equation}\label{decayschr0}
  \left\|  u(t,x)-u_{\{\alpha_j\} }(t,x) -e^{-2iM\ln t} \mathcal T \left(e^{it\partial_x^2}u_+(x)\right) \right\|_{L^2}=\mathcal O (t^{\frac12^-}).
\end{equation}
Moreover, under the decay condition $ \widehat{u_+}\in H^2(\mathbb R)$, we have the following convergence as $t$ goes to zero:
\begin{equation}\label{decay0}
\| u(t,x)-u_{\{\alpha_j\}}(t,x)-e^{-2iM \ln t} \widehat { \overline{ u_+}} \left(-\frac x2 \right)\|_{L^2}=\mathcal O (t^{\frac12^-}).
\end{equation}
\end{theo} 
Condition \eqref{fracu} is not very restrictive and is only about $\widehat{u_+}$ being small enough at equidistant points. The fact that those points are in $\mathbb Z/2$ should not be surprising since bounds in terms of $ 1/d(x,\mathbb Z/2)$ already appear in proofs of \cite{BV5}. \par
The proof of Theorem \ref{stab} starts by performing a change of phase 
\[
\psi(t,x)= e^{i2M\ln t} u(t,x),
\]
leading to the Wick renormalisation of equation \eqref{eq}:
\begin{equation}\label{schro}
i\partial_t \psi +\partial_x^2 \psi +\left(|\psi|^2-\frac {2M}t\right)\psi=0.
\end{equation}
Then, if we define $v$ as the pseudo-conformal transformation of $\psi$:
\begin{equation} \label{PCTran}
v(t,x)=\mathcal T(\psi)(t,x) = \frac{e^{i\frac{x^2}{4t}}}{\sqrt t}\overline{\psi}\left(\frac1t,\frac xt\right),
\end{equation}
the function $v$ is a solution of: 
\begin{equation}
i\partial_tv+\partial_x^2 v + \frac 1 {t} (|v|^2-2M)v=0.
\label{NLS}
\end{equation}
Since those transformation are reversible, in the rest of the paper Theorem \ref{stab} rephrases for \eqref{NLS} as the following existence of wave operators result.
First, define
\begin{equation}\label{defv1}
v_1(t,x) = \mathcal T (e^{2iM\ln t}u_{\{\alpha_j\}})+e^{it\partial_x^2}u_+ (x)= A(t,x) + e^{it\partial_x^2}u_+(x),
\end{equation}
where $A(t,x)$ has been defined in \eqref{eqA} and solves \eqref{NLS}. We shall find $ t_0>0$ and a solution $v$ of \eqref{NLS} such that $v-v_1\in \mathcal C([t_0,\infty),H^s(\mathbb R))$, with a decay rate as $t$ goes to infinity given by:
\begin{equation} \label{decaynls}
\forall k \in \llbracket 0,s \rrbracket, \quad \|\nabla^k(v-v_1)(t)\|_{L^2}=\mathcal O (t^{-\frac12^+}).
\end{equation}
Note that this will prove the decay \eqref{decayschr0}, and even more:
\begin{equation}\label{decayschro}
\forall k \in \llbracket 0,s \rrbracket, \, \left\|(-i)^kJ^k\left[ u(t,x)-u_{\{\alpha_j\} }(t,x)\right] -e^{-2iM\ln t} \mathcal T \left(e^{it\partial_x^2}\nabla^ku_+(x)\right) \right\|_{L^2}=\mathcal O (t^{\frac12^-}),
\end{equation}
where $J$ is the pseudo-conformal operator:
\begin{equation}\label{defJ}
J(f)(t,x)=\left(\frac x2+it\nabla\right)f(t,x).
\end{equation}
To obtain \eqref{decay0} and its generalization:
\begin{equation}\label{decayl2}
\forall k\in \llbracket 0,s\rrbracket, \, \left\| J^k \left[u(t,x)-u_{\{\alpha_j\} }(t,x)\right] -e^{-2iM\ln t}\left(\frac x2\right)^k\widehat{\overline{u_+}}\left(-\frac x2\right)  \right\|_{L^2} = \mathcal O (t^{\frac12^-}),
\end{equation}
 we make use of Schrödinger linear evolution properties, as explained in section \ref{fin}.\par
Although a bit computational, the proof can now be sum up as a fixed point theorem in an appropriate Sobolev space. The functional used for the fixed point is given by the Duhamel's formula. The expansion of the cubic non linearity will provide several terms that we will group in powers of $e^{it\partial_x^2}u_+$. The smallness hypothesis on $\|\alpha_j\|_{l^{2,s}}$ comes from the linear term. The term that will provide the weaker time decay is the quadratic one, with conjugated phases. Finally, we perform the reverse pseudo-conformal transformation in section \ref{fin} to conclude with the proof.\par
Concerning the tools at our disposal, we will deal with oscillatory integrals on which we often perform integration by parts after a Fourier transform. That gives sharper estimates than the use of Sobolev embeddings or Strichartz estimates. \par
Before ending the introduction, let us indicate that in addition to the study of low regularity solutions of the 1-D cubic Schrödinger equation, this article is also motivated by the study of dynamics of vortex filaments. More precisely, we are referring to a model derived from Euler equations by Da Rios in 1906 in \cite{darios1906motion} called the binormal flow:
\begin{equation} \tag{BF} \label{BF}
\chi_t = \chi_x \wedge \chi_{xx},
\end{equation} 
where $\chi$ is an arc length parameterized curve in $\mathbb R^3$ and where the vortex is supposed to be located near $\chi(t)$.
For further informations and references about this equation, see for instance the introduction of \cite{BV5}.\par
If $T$ represents the tangent vector of a solution $\chi$, then $T$ solves the Schrödinger map with values in $\mathbb S^2$:
\begin{equation}
T_t=T\wedge T_{xx}.
\end{equation}
Moreover, Hasimoto constructed in \cite{hasimoto1972soliton} a correspondance between solutions of \eqref{eq} and solutions of \eqref{BF} using the Frenet frame, considering the curvature of $\chi(t,x)$ as the modulus of the NLS-solution, and the torsion as the derivative of its phase. This transformation stands in the spirit of the Madelung transform. \par
In \cite{BV5}, the autors showed that the solution \eqref{solA} (for which we are proving the stability) depicts, via the Hasimoto transform, the evolution of a polygonal line through the binormal flow. In a few words, every Dirac mass corresponds to the formation of a corner in $\chi(t)$ at arclength parameter $x=j\in\mathbb Z$, as the one developed by the self-similar solution of \eqref{BF} of curvature $\frac{\alpha_j}{\sqrt t}$.
Hence, the odds are high that the class of solutions exhibited in Theorem \ref{stab} could correspond to the evolution of a curve with several corners through binormal flow. With the convergence rate of $\mathcal O(t^{\frac12-})$ in Theorem \ref{stab}, the method should be the same as for the case of one corner done in \cite{banica13} and \cite{banica2015initial}. However, this method is quite intricate, and adding much more complicated terms when considering several Dirac masses makes this task dense enough to be the object of a future work. 
\section*{Acknowledgments}
This paper has been written during my PhD under the supervision of Valeria Banica and Nicolas Burq, I would like to thank them for their precious help. 
\section{Proof of Theorem \ref{stab} }\label{proof}
 The proof follows the same path as the proof of Theorem 1.4 in \cite{BV1} and is based on a scattering argument. To do so, we introduce an appropriate functional $\phi$ on which we perform a fixed point argument. Compared to \cite{BV1} where only one term is considered, the analysis will be much more delicate here, due to the increased complexity of the function $A(t,x)$ defined by \eqref{eqA}. We recall that in \cite{BV1}, the perturbation is done around only one term of the form $e^{-i|\alpha|^2\ln t}\overline{\alpha}e^{\frac{it^2}4} e^{\frac{ix}2}$.  
\subsection{Fixed point functional}\leavevmode\\
Comparing at infinity the solution $v$  of \eqref{NLS} to $v_1$ defined by \eqref{defv1} amounts to consider the functional given by the Duhamel's formula: 
\[
\phi:v(t,x) \mapsto v_1(t,x) + i\displaystyle\int_t^\infty e^{i(t-\tau)\partial_x^2}\left( -\frac{|v|^2-2M}{ \tau}v -(i\partial_t+\partial_{xx})v_1 \right)d\tau .
\]
Since $A$ solves \eqref{NLS}, we have
\[
(i\partial_t+\partial_{xx})v_1 =(i\partial_t+\partial_{xx})A+(i\partial_t+\partial_{xx})e^{it\partial_x^2}u_+ =- \frac{|A|^2-2M}{ t}A .
\]
We will control the non linear term in $v$ by comparing it to the corresponding term in $v_1$:
\begin{align*}
\phi(v) -v_1= + i\displaystyle\int_t^\infty e^{i(t-\tau)\partial_x^2} &\left( -\frac{|v|^2-2M}{\tau}v + \frac{|v_1|^2-2M}{ \tau}v_1 \right.\\
&\left. \quad \quad - \frac{|v_1|^2-2M}{ \tau}v_1 + \frac{|A|^2-2M}{ \tau}A  \right)d\tau .
\end{align*}
We then develop the $v_1$-cubic term: 
\begin{align*}
(|v_1|^2-M)v_1= &((A+ e^{it\partial_x^2}u_+)(\overline A +   e^{-it\partial_x^2}\overline{u_+})-2M)(A+ e^{it\partial_x^2}u_+)\\
=&(|A|^2-2M)A + 2|A|^2 e^{it\partial_x^2}u_+ -2M e^{it\partial_x^2}u_+ + A^2  e^{-it\partial_x^2}\overline{u_+} + 2A|e^{it\partial_x^2}u_+|^2\\
+&\overline A (e^{it\partial_x^2}u_+)^2+|e^{it\partial_x^2}u_+|^2 e^{it\partial_x^2}u_+ ,
\end{align*}
so we get
\begin{align*}
\phi(v) -v_1=& + i\displaystyle\int_t^\infty e^{i(t-\tau)\partial_x^2}\left( -\frac{|v|^2-2M}{ \tau}v + \frac{|v_1|^2-2M}{ \tau}v_1 \right) d\tau\\
+& i\displaystyle\int_t^\infty e^{i(t-\tau)\partial_x^2}  (-2|A|^2   e^{i\tau\partial_x^2}u_+ +2 M   e^{i\tau\partial_x^2}u_+ - A^2   e^{-i\tau\partial_x^2}\overline{u_+} - 2A|e^{i\tau\partial_x^2}u_+|^2 \\
&\quad\quad\quad\quad\quad\quad- \overline A e^{2i\tau\partial_x^2}u_+^2-|e^{it\partial_x^2}u_+|^2   e^{i\tau\partial_x^2}u_+ )\frac 1{ \tau}d\tau .
\end{align*}
Finally, after expanding $|A|^2$, the functional $\phi$ writes 
\begin{align}
&(\phi(v)-v_1)(t,x) \nonumber \\
=&+ i\displaystyle\int_{t}^\infty e^{i(t-\tau)\partial_x^2}\left(\frac{(|v|^2-2M)v}{ \tau}- \frac{(|v_1|^2-2M)v_1}{ \tau}\right)d\tau \label{I}\\
&-2i \displaystyle\int_{t}^\infty e^{i(t-\tau)\partial_x^2}  \sum_{p\neq j} e^{- i (|\alpha_p|^2-|\alpha_j|^2)\ln \tau}e^{i(p-j)\frac x2} \overline{\alpha_p}\alpha_j e^{i\frac{j^2-p^2}4 \tau}  e^{i\tau\partial_x^2}u_+\frac{d\tau}{\tau} \label{Ja}\\
&-2i \displaystyle\int_{t}^\infty e^{i(t-\tau)\partial_x^2}  \sum_{(p,j)\in \mathbb Z^2} e^{- i(|\alpha_p|^2-|\alpha_j|^2)\ln \tau+i(p-j)\frac x2} \label{Jb}  \\
&\quad \quad \quad \times (\overline{\alpha_p}R_j + \overline{R_p}\alpha_j +\overline{R_p}R_j)e^{i\frac{j^2-p^2}4 \tau}e^{i\tau\partial_x^2}u_+\frac{d\tau}{\tau} \nonumber \\
&- i  \displaystyle\int_{t}^\infty e^{i(t-\tau)\partial_x^2} A^2 e^{-i\tau\partial_x^2}\overline{u_+} \frac{d\tau}{\tau} \label{Jc}\\
&- i \displaystyle\int_{t}^\infty e^{i(t-\tau)\partial_x^2} (2 A|e^{i\tau\partial_x^2}u_+|^2 + \overline{A}(e^{i\tau\partial_x^2}u_+)^2 )\frac{d\tau}{\tau} \label{Jd}\\
&- i \displaystyle\int_{t}^\infty e^{i(t-\tau)\partial_x^2}  |e^{i\tau\partial_x^2}u_+|^2e^{i\tau \partial_x^2} u_+ \frac{d\tau}\tau \label{Je}\\
=&I(v) + J_a+J_b+J_c +J_d+J_e , \nonumber
\end{align}
where $I(v)$ is the first term and $J_*$ are the following source terms. Since we will treat the norm of each integral separatly, we will not be regarding whether they write with $+i$ or $-i$. We will not either keep the minus sign in the phase as it doesn't affect the computation. \par
Thanks to their behavior, the  $R_j$ terms inside $J_b$ and $J_c$ will be very easy to treat, as well as $J_e$ where we can largely exploit dispersion of the Schrödinger operator:
\begin{equation}\label{disp}
\|e^{it\partial_x^2}u_+\|_{L^\infty_x } \leq \frac1{\sqrt t}\|u_+\|_{L^1}.
\end{equation}
Then, the fact that there is a phase depending on $j$ or $p$ in some of these integrals will produce a shift after an integration by parts and requires the hypothesis on $\widehat{u_+}/(\cdot + \mathbb Z/2)$.
Finally, the slowest decay rate $1/t^{\frac12}$ comes from using dispersion estimates for $J_d$.\\

We want to apply the fixed point theorem on $\phi: X_\delta\rightarrow X_\delta$, where
\[
X_\delta = \left\{ v\in \mathcal C([t_0,\infty),L^\infty(\mathbb R)) / \|v-v_1\|_S \leq \delta \right \}  ,
\]
with
\[
\|f\|_S=  \sum_{0\leq k\leq s} \sup_{t_0\leq t} t^\mu \|\nabla^k(f)(t)\|_{L^2}
\]
for $\mu$ and $\delta$ strictly positives, to be chosen later.
The first step is to verify that $\phi(X_\delta)\subset X_\delta$.

\subsection{Estimates on the fixed point argument terms}\leavevmode\\
In this subsection, we give estimations of $I(v)$, $J_a$, $J_b$, $J_c$, $J_d$, $J_e$ and the iterated gradients.
\begin{lemma}[Estimation of $I(v)$] Let $I$ be defined by \eqref{I}:
\[
I(v)=i\displaystyle\int_{t}^\infty e^{i(t-\tau)\partial_x^2}\left(\frac{(|v|^2-2M)v}{ \tau}- \frac{(|v_1|^2-2M)v_1}{ \tau}\right)d\tau,
\]
then,
\[
 \|I(v)\|_{L^2}\lesssim \footnote{Where $\lesssim$ means ``$\leq$ up to a constant".}\|v-v_1\|_S\left( \frac{\|\alpha_j\|_{l^{2,q}}^2}\mu +\frac{C(t_0,\|\alpha_j\|_{l^{2,s}})}{t_0^{2\gamma}(\mu+2\gamma)}+  \frac{\|u_+\|_{L^1}^2}{t_0(1+\mu)}  + \frac{ \|v-v_1\|^2_S }{t_0^{\mu} } \right) .
\]
\label{lemmi}
\end{lemma}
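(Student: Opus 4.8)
The plan is to estimate $\|I(v)\|_S$ by reducing the difference of cubic nonlinearities to a multilinear expression in $v-v_1$ and $v_1$, then bound each resulting term by splitting $v_1 = A + e^{it\partial_x^2}u_+$ and further $A = \sum_j e^{-i|\alpha_j|^2\ln\tau}(\overline{\alpha_j}+\overline{R_j})e^{i(\dots)}$ into its principal ($\alpha_j$) and remainder ($R_j$) parts. First I would use the $L^2$--$L^\infty$ Hölder-type bound for the Duhamel integral: since $e^{i(t-\tau)\partial_x^2}$ is unitary on $L^2$ and commutes with $\nabla$, for each $k\le s$ one has
\[
\|\nabla^k I(v)(t)\|_{L^2}\le \int_t^\infty \frac{1}{2\tau}\bigl\|\nabla^k\bigl((|v|^2-2M)v-(|v_1|^2-2M)v_1\bigr)(\tau)\bigr\|_{L^2}\,d\tau.
\]
The algebraic identity $(|v|^2-2M)v-(|v_1|^2-2M)v_1 = (|v|^2-2M)(v-v_1) + (\overline v v_1 + v_1\overline{v_1})(v-v_1)$ — or any symmetric trilinear splitting — exhibits every summand as containing exactly one factor of $w:=v-v_1$ and two factors drawn from $\{v,v_1\}$, hence (since $v = v_1 + w$) two factors from $\{A, e^{it\partial_x^2}u_+, w\}$ plus the constant $M$.

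Next I would distribute the $\nabla^k$ via the Leibniz rule and estimate each trilinear piece by placing $w$ (or whichever factor carries the most derivatives) in $L^2$ and the other two factors in $L^\infty$. The key quantitative inputs are: (i) for the $A$-factors, $\|\nabla^k A(\tau)\|_{L^\infty} \lesssim \|\alpha_j\|_{l^{2,q}} + \|R_j(1/\tau)\|_{l^{2,q}}$ with the $R_j$ part contributing a $\tau^{-\gamma}$ gain by \eqref{borneR}; (ii) for the $e^{i\tau\partial_x^2}u_+$ factors, the dispersive decay \eqref{disp} gives $\|e^{i\tau\partial_x^2}u_+\|_{L^\infty}\lesssim \tau^{-1/2}\|u_+\|_{L^1}$, and $\|\nabla^k e^{i\tau\partial_x^2}u_+\|_{L^\infty}\lesssim \tau^{-1/2}\|\nabla^k u_+\|_{L^1}$ absorbed into $C(t_0,\dots)$ or $\|u_+\|_{L^1}$; (iii) $\|\nabla^k w(\tau)\|_{L^2}\le \tau^{-\mu}\|w\|_S$ by definition of $\|\cdot\|_S$, and one further uses the Sobolev embedding $H^s\hookrightarrow L^\infty$ ($s\ge 1$) to put a $w$-factor in $L^\infty$ when two $w$'s occur, costing another $\tau^{-\mu}\|w\|_S$. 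Collecting: the term with two $A$-principal factors yields $\int_t^\infty \tau^{-1-\mu}\|\alpha_j\|_{l^{2,q}}^2\|w\|_S\,d\tau \lesssim t^{-\mu}\|\alpha_j\|_{l^{2,q}}^2\|w\|_S/\mu$; terms with an $R_j$ factor give $\int \tau^{-1-\mu-2\gamma}d\tau$, producing the $C(t_0,\|\alpha_j\|_{l^{2,s}})/(t_0^{2\gamma}(\mu+2\gamma))$ factor; terms with two $e^{i\tau\partial_x^2}u_+$ factors give $\int \tau^{-1-\mu-1}\|u_+\|_{L^1}^2 d\tau$, hence $\|u_+\|_{L^1}^2/(t_0(1+\mu))$ after evaluating at $t\ge t_0$; terms with two $w$ factors give $\int \tau^{-1-3\mu} d\tau$, i.e. $\|w\|_S^3/(t_0^\mu\cdot 3\mu)$ — rewritten as $\|w\|_S\cdot\|w\|_S^2/t_0^\mu$ up to the harmless constant. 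Summing over $0\le k\le s$ absorbs the finitely many Leibniz constants into the implicit $\lesssim$, and factoring out $\|w\|_S = \|v-v_1\|_S$ gives exactly the claimed bound.

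The main obstacle is bookkeeping rather than conceptual: one must verify that in every mixed term (one $A$, one $e^{i\tau\partial_x^2}u_+$; one $w$, one $A$; etc.) the product of time-decay exponents and the integrability at $\tau=\infty$ work out, and that the cross terms do not produce a worse factor than those displayed — in particular the mixed $A$–$u_+$ term must be controlled by $\|\alpha_j\|\cdot\|u_+\|_{L^1}\cdot(\text{something})$, which one bounds crudely by Young's inequality $ab\le a^2+b^2$ so it is subsumed into the $\|\alpha_j\|_{l^{2,q}}^2$ and $\|u_+\|_{L^1}^2$ contributions already listed. A secondary subtlety is the use of $H^s\hookrightarrow L^\infty$, which is why $s\in\mathbb N^*$ (so $s\ge 1$) is hypothesized; for the top derivative $k=s$ one must be slightly careful to put the extra derivative on the $L^2$ factor and the undifferentiated factors in $L^\infty$, using that $\|f g\|_{H^s}\lesssim \|f\|_{H^s}\|g\|_{W^{s,\infty}}$ and that $A$ and $e^{i\tau\partial_x^2}u_+$ lie in $W^{s,\infty}$ with the stated decay (here \eqref{fracu} and $u_+\in W^{1,s}$ are not needed for $I(v)$ itself but guarantee the source terms $J_*$ make sense). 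No oscillatory-integral estimate is required for this lemma; those enter only for $J_a$–$J_e$.
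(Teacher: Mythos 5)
Your proposal takes essentially the same route as the paper: reduce via unitarity of $e^{i(t-\tau)\partial_x^2}$ (Strichartz with the pair $(\infty,2)$) to $\int_t^\infty\||v|^2v-|v_1|^2v_1-2M(v-v_1)\|_{L^2}\frac{d\tau}{\tau}$, bound the cubic difference by $(2M+\|v\|_{L^\infty}^2+\|v_1\|_{L^\infty}^2)\|v-v_1\|_{L^2}$, then use $\|\alpha_j\|_{l^{2,q}}$ and \eqref{borneR} for the $A$-part, dispersion \eqref{disp} for $e^{i\tau\partial_x^2}u_+$, Gagliardo--Nirenberg \eqref{GN} for the $\|v-v_1\|_{L^\infty}^2$ term, and the same elementary time integrations; this is correct. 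Two cosmetic remarks: your displayed algebraic identity has a conjugation slip (the correct splitting contains a $v_1^2\,\overline{(v-v_1)}$ term rather than $|v_1|^2(v-v_1)$, which is harmless since only moduli enter the estimates), and the paper actually proves only the $k=0$ part here, deferring the $\nabla^k$, $k\geq 1$, bounds to the next lemma, whereas you fold them into the same argument.
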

Note that we do not need any smallness hypothesis on $u_+$ if we choose $t_0$ large and $\|\alpha_j\|_{l^{2,q}}$ small. 
\begin{proof}
The idea of the proof is to first apply Strichartz inhomogeneous inequality with the admissible couple $(\infty,2)$  and then use the dispersion inequality \eqref{disp} as well as \eqref{borneR} and some usual upper-bounds:
\begin{align*}
&\sup_{t_0\leq t} t^\mu\left \|\displaystyle\int_{t}^\infty e^{i(t-\tau)\partial_x^2}\left(\frac{(|v|^2-2M)v}{ \tau}- \frac{(|v_1|^2-2M)v_1}{  \tau}\right)d\tau\right \|_{L^2}\\
\lesssim &\sup_{t_0\leq t}t^\mu \displaystyle\int_{t}^\infty \| |v|^2v-|v_1|^2v_1 -2M(v-v_1)\|_{L^2} \frac{d\tau}\tau\\
\lesssim&\sup_{t_0\leq t}t^\mu \displaystyle\int_{t}^\infty (2M+\|v_1\|^2_{L^\infty}+\|v\|^2_{L^\infty})\|v-v_1\|_{L^2}\frac{d\tau}\tau\\
\lesssim& \|v-v_1\|_S \sup_{t_0\leq t}t^\mu \displaystyle\int_{t}^\infty (2M+\|v_1\|^2_{L^\infty}+\|v\|^2_{L^\infty})\frac{d\tau}{\tau^{1+\mu}}\\
\lesssim&\|v-v_1\|_S \left( \frac{ 2M}\mu +  \sup_{t_0\leq t}t^\mu \displaystyle\int_{t}^\infty  3\|v_1\|^2_{L^\infty} +2 \|v-v_1\|_{L^\infty}^2  \frac{d\tau}{\tau^{1+\mu}} \right)\\
\lesssim&\|v-v_1\|_S\left(\frac{2M}{\mu}+\frac{6   \|\alpha_j\|_{l^{2,q}}^2}\mu +\frac {C(t_0,\|\alpha_j\|_{l^{2,s}})}{t_0^{2\gamma}(\mu+2\gamma)} +\frac{6}{1+\mu}\frac{\|u_+\|_{L^1}^2}{t_0}  +2 \sup_{t_0\leq t}t^\mu  \displaystyle\int_{t}^\infty   \|v-v_1\|_{L^\infty}^2\frac{d\tau}{\tau^{1+\mu}} \right) .
\end{align*}
Since $q>\frac12$, we have also controlled in the last steps the $l^1$ norm with the $l^{2,s}$ norm of $(\alpha_j)_{j\in\mathbb Z}$, using Cauchy--Schwarz.
Finally, we apply Gagliardo--Nirenberg's interpolation 
\begin{equation}\label{GN}
\|f\|_\infty^2\leq \|f\|_{L^2}\|f'\|_{L^2}
\end{equation}
on the last integral:
\begin{align*}
\sup_{t_0\leq t}t^\mu \displaystyle\int_{t}^\infty   \|v-v_1\|_{L^\infty}^2\frac{d\tau}{\tau^{1+\mu}} \leq & \sup_{t_0\leq t}t^\mu   \displaystyle\int_{t}^\infty  \|v-v_1\|_{L^2} \|\nabla(v-v_1)\|_{L^2}  \frac{d\tau}{\tau^{1+\mu}}   \\
 \leq & \frac{ \|v-v_1\|^2_S }{t_0^{\mu}} ,
\end{align*}
to conclude that
\[
 \|I(v)\|_{L^2}\lesssim \|v-v_1\|_S\left( \frac{\|\alpha_j\|_{l^{2,q}}^2}\mu +\frac{C(t_0,\|\alpha_j\|_{l^{2,s}})}{t_0^{2\gamma}(\mu+2\gamma)}+  \frac{\|u_+\|_{L^1}^2}{t_0(1+\mu)}  + \frac{ \|v-v_1\|^2_S }{t_0^{\mu} } \right) .
\]
\end{proof}

We now control the gradient of this integral with the following lemma.

\begin{lemma}[Estimation of $\nabla^kI(v)$]
\label{gradi}
Let $I$ be defined by \eqref{I}:
\[
I(v)=i\displaystyle\int_{t}^\infty e^{i(t-\tau)\partial_x^2}\left(\frac{(|v|^2-2M)v}{ \tau}- \frac{(|v_1|^2-2M)v_1}{ \tau}\right)d\tau,
\]
then, for $k\in\llbracket 1, s\rrbracket$ and $v\in X_\delta$: 
\[
\sup_{t_0\leq t } t^\mu \| \nabla^kI (v)(t)\|_{L^2} \lesssim  \|v-v_1\|_S \left(\left(\frac{\|\alpha_j\|_{l^{2,q}}}{\mu}\right)^2+\left(\frac {C(t_0,\|\alpha_j\|_{l^{2,s}})}{(\mu+\gamma)t_0^\gamma}\right)^2+\left(\frac{\|u_+\|_{W^{1,s}}}{\sqrt t_0 (\frac12+\mu)}\right)^2+\left(\frac{\delta}{2\mu t_0^\mu}\right)^2\right) .
\]
\end{lemma}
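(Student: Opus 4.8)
The plan is to mimic the proof of Lemma \ref{lemmi}, i.e.\ Strichartz with the admissible couple $(\infty,2)$ followed by dispersion, but now after commuting $\nabla^k$ through the Duhamel integral. Since $e^{i(t-\tau)\partial_x^2}$ commutes with $\nabla$, the inhomogeneous Strichartz inequality gives
\[
\sup_{t_0\le t} t^\mu\|\nabla^k I(t)\|_{L^2} \lesssim \sup_{t_0\le t} t^\mu \int_t^\infty \big\| \nabla^k\big( |v|^2 v - |v_1|^2 v_1 - 2M(v-v_1)\big)\big\|_{L^2}\,\frac{d\tau}{\tau}.
\]
First I would expand the difference of cubic terms as a telescoping sum so that each summand contains exactly one factor of $v-v_1$ (e.g.\ $|v|^2v-|v_1|^2v_1 = |v|^2(v-v_1) + v_1\bar v(v-v_1) + v_1\overline{v_1}\,(\bar v - \overline{v_1}) \cdot(\dots)$, reorganised symmetrically), and similarly treat the $-2M(v-v_1)$ term. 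Then apply the Leibniz rule to distribute $\nabla^k$ over the (at most) three factors.

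The key point is the product/Sobolev bookkeeping: in each term of the Leibniz expansion one has a product of three functions with derivatives of orders $k_1+k_2+k_3=k$ summing to $k$, and exactly one of the three factors is of type $v-v_1$ (or its conjugate). I would estimate the factor carrying the most derivatives in $L^2$ and the remaining two in $L^\infty$, using the algebra-type inequality that for $k\le s$, $\|\nabla^{k_1}f\,\nabla^{k_2}g\,\nabla^{k_3}h\|_{L^2}\lesssim \|f\|_{H^{k}}\|g\|_{H^{k}}\|h\|_{H^k}$ type controls (equivalently Gagliardo--Nirenberg \eqref{GN} to trade an $L^\infty$ of a low-order derivative against $L^2$ of one order higher). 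Whenever the high-derivative factor is $v-v_1$ this produces $\|v-v_1\|_S$ times $\|v_1\|_{H^s}^2$ (up to time weights); whenever it is $v_1$ one still gets $\|v-v_1\|_S$ (from the unique $v-v_1$ factor, estimated in $L^\infty$ via \eqref{GN}) times $\|v_1\|_{H^s}$ times $\|\nabla^j v_1\|_{L^\infty}$, and for the purely cubic-in-$(v-v_1)$ remainder one gets $\|v-v_1\|_S^3\le \delta^2\|v-v_1\|_S$ on $X_\delta$. In every case one factor of $\|v-v_1\|_S$ is pulled out, and the leftover $t^\mu\int_t^\infty \tau^{-1-\mu}\,d\tau = 1/\mu$ (or with an extra $\tau^{-2\gamma}$ or $\tau^{-1/2}$ from \eqref{disp}/\eqref{borneR}, giving the $1/(\mu+\gamma)$, $1/(\tfrac12+\mu)$, $1/(2\mu)$ denominators) is harmless.

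It remains to control $\|\nabla^j v_1\|_{L^\infty}$ and $\|v_1\|_{H^s}$ appearing as coefficients. Recall $v_1 = A + e^{it\partial_x^2}u_+$. For the $A$-part one uses \eqref{eqA}: each $\nabla^k$ brings down a factor $(j/2)^k$, and summing $\sum_j (1+|j|)^k|\alpha_j+R_j(1/t)|$ is controlled by $\|\alpha_j\|_{l^{2,q}}$ (and $\|R_j\|_{l^{2,q}}$ via \eqref{borneR}) using Cauchy--Schwarz and $q-s>\tfrac12$ — this is where $\|\alpha_j\|_{l^{2,q}}^2$ and $C(t_0,\|\alpha_j\|_{l^{2,s}})t_0^{-\gamma}$ enter, squared because $|A|^2$ appears. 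For the $e^{it\partial_x^2}u_+$-part one uses $\|\nabla^j e^{it\partial_x^2}u_+\|_{L^\infty}\le t^{-1/2}\|\nabla^j u_+\|_{L^1}\le t^{-1/2}\|u_+\|_{W^{1,s}}$ from \eqref{disp}, and $\|e^{it\partial_x^2}u_+\|_{H^s}=\|u_+\|_{H^s}\le\|u_+\|_{W^{1,s}}$, which is why $\|u_+\|_{W^{1,s}}^2 t_0^{-1}$ (from two such factors) appears with the $(\tfrac12+\mu)$ denominator. Collecting the four types of contributions — pure $\alpha$, $R_j$, $u_+$, and pure $(v-v_1)$ — and bounding each by its square as displayed yields the claimed estimate.

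The main obstacle I anticipate is purely combinatorial rather than analytic: organising the Leibniz expansion of $\nabla^k(|v|^2v-|v_1|^2v_1)$ so that every one of the many resulting terms is visibly of the form (one $\|v-v_1\|_S$ factor) $\times$ (a bounded-in-$t_0$ coefficient) $\times$ (an integrable-in-$\tau$ weight), without double-counting and while keeping the sharp $1/\sqrt{t_0}$ from dispersion on exactly the $u_+$-heavy terms; once the terms are correctly grouped, each individual estimate is a routine application of \eqref{GN}, \eqref{disp}, \eqref{borneR} and Cauchy--Schwarz exactly as in Lemma \ref{lemmi}.
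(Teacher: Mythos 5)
Your overall skeleton (Strichartz with $(\infty,2)$, a telescoping of $|v|^2v-|v_1|^2v_1$ with exactly one $v-v_1$ factor, Leibniz, then dispersion, \eqref{borneR} and Cauchy--Schwarz for the coefficients) is the same as the paper's, but your placement rule for the norms contains a genuine gap. You propose to ``estimate the factor carrying the most derivatives in $L^2$'' and your bookkeeping produces coefficients $\|v_1\|_{H^s}$ (and implicitly $\|\nabla^{k_1}v\|_{L^2}$, $\|\nabla^{k-p}v^2\|_{L^2}$). These quantities are infinite: $v_1=A+e^{it\partial_x^2}u_+$ and $A$ is a superposition of plane waves $e^{ixj/2}$ (see \eqref{eqA}), so neither $v_1$ nor $v$ belongs to any $L^2$-based space; only $v-v_1$ does. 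Your final paragraph controls the $A$-part only in $L^\infty$ (Cauchy--Schwarz with $q-s>\tfrac12$), which is correct, but it does not and cannot repair the $H^s$ norms that your placement rule requires. The viable rule --- the one the paper uses --- is the opposite: the unique $v-v_1$ factor always goes in $L^2$ (yielding $\|v-v_1\|_S$), and \emph{every} other factor, however many derivatives it carries, goes in $L^\infty$.

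This forces you to confront the endpoint $k=s$, which your proposal never addresses and which is the only delicate point of the lemma. When all $s$ derivatives fall on $v$ (or on $v^2$, $v+v_1$) while the $v-v_1$ factor is underived, you cannot put $\|\nabla^s v\|_{L^\infty}$ via Gagliardo--Nirenberg \eqref{GN}, because that would require $s+1$ derivatives of $v-v_1$ in $L^2$, which the space $X_\delta$ does not control (your blanket appeal to \eqref{GN} ``to trade $L^\infty$ against one more derivative in $L^2$'' only works for derivative orders $\le s-1$); and you cannot put $\nabla^s v$ in $L^2$ either, for the reason above. The paper's fix is to split $\nabla^s v=\nabla^s(v-v_1)+\nabla^s v_1$, place $\nabla^s(v-v_1)$ in $L^2$ while moving the underived $v-v_1$ to $L^\infty$ via \eqref{GN}, and bound $\|\nabla^s v_1\|_{L^\infty}$ explicitly (summability of $(1+|j|)^s|\alpha_j|$ using $q-s>\tfrac12$ together with \eqref{borneR}, plus dispersion \eqref{disp} for the $u_+$ part). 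Without this splitting, the terms in your Leibniz expansion where the top derivative misses $v-v_1$ are not estimable, so the argument as written does not close; the rest of your accounting (where the four squared contributions come from and the time integrations giving the $1/\mu$, $1/(\mu+\gamma)$, $1/(\tfrac12+\mu)$, $1/(2\mu)$ factors) is consistent with the paper.
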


\begin{proof}

To obtain this, we need the following computations. Using Gagliardo–Nirenberg interpolation \eqref{GN}, we have for $k\in \llbracket 0,s-1\rrbracket$:
\begin{equation}
|\nabla^k(v-v_1)(t)|\leq C \|\nabla^{k+1}(v-v_1)(t)\|^{\frac12}_{L^2}   \|\nabla^k(v-v_1)(t)\|^{\frac12}_{L^2} \lesssim \frac{\delta}{t^\mu}  .
\end{equation}
Since $\nabla^k$ commute with the Schrödinger operator, we have
\begin{equation}
|\nabla^k e^{it\partial_x^2} u_+ |\leq  \frac {\|\nabla^k u_+ \| _{L^1}}{\sqrt t}.
\end{equation}
Thus, we can control the $k^{th}$ gradient of $v$, for  $k\in \llbracket 0,s-1\rrbracket$, introducing $v_1$: 
\begin{align}
|\nabla^kv(t)| \leq C |\nabla^k v_1(t)|+ |\nabla^k (v-v_1)(t) | \lesssim & \|\nabla^kA\|_{L^\infty} +  \frac{\|\nabla^ku_+\|_{L^1}}{\sqrt t} + \frac{\delta}{t^\mu} \\
\lesssim& \|\alpha_j\|_{l^{2,q}} +\frac 1{t^\gamma}+  \frac{\|\nabla^ku_+\|_{L^1}}{\sqrt t} + \frac{\delta}{t^\mu}. 
\end{align}
Indeed, by hypothesis $q-k>\frac12$ for each $k$ considered here, so we can control $\|\nabla^kA\|_{L^\infty}$ with the $l^{2,q}$ norm of $\alpha_j$ and with the bound \eqref{borneR}.
Similarly, the expansion of the $k^{th}$ gradient of $v^2$ with $k\in \llbracket 1,s-1\rrbracket$ gives:
\begin{align}
|\nabla^k v^2| \leq& \sum_{p=0}^k\binom kp   |\nabla^p v||\nabla^{k-p} v|  \\
\lesssim & \sum_{p=0}^k\binom kp   \left(\|\nabla^pA\|_{L^\infty} +  \frac{\|\nabla^pu_+\|_{L^1}}{\sqrt t} + \frac{\delta}{t^\mu} \right) \\
&\times \left( \|\nabla^{k-p}A\|_{L^\infty} +  \frac{\|\nabla^{k-p}u_+\|_{L^1}}{\sqrt t} + \frac{\delta}{t^\mu} \right). \nonumber
\end{align}

Now, let $k\in \llbracket 1,s-1\rrbracket$, the first step is to use $L^\infty L^2$ Strichartz estimates on $I$:
\[
\sup_{t_0\leq t } t^\mu \| \nabla^kI (v)(t)\|_{L^2} \leq \sup_{t_0\leq t } t^\mu  \int_t^\infty \| \nabla^k(|v|^2v - |v_1|^2v_1)\|_{L^2} + \|2M \nabla^k (v-v_1)\|_{L^2} \frac {d\tau}\tau .
\]
The first norm in this integral can be expanded as follows:
\begin{align*}
&\nabla^k(|v|^2v - |v_1|^2v_1 ) \\
=& \sum_{p=0}^k \binom kp   (\nabla^p \overline v \nabla^{k-p}v^2 - \nabla^p \overline{v_1}\nabla^{k-p}v_1^2 + \nabla^p \overline{v_1}\nabla^{k-p}v^2 - \nabla^p \overline{v_1}\nabla^{k-p}v^2 )\\ 
=&\sum_{p=0}^k \binom kp   ( \nabla^p (\overline{v-v_1})\nabla^{k-p}v^2 + \nabla^{p}(v^2-v_1^2)\nabla^{k-p}\overline{v_1} )\\
=& \sum_{p=0}^k \binom kp   ( \nabla^p (\overline{v-v_1})\nabla^{k-p}v^2 +  \nabla^{p}[(v-v_1)(v+v_1)]\nabla^{k-p}\overline{v_1} )\\
=& \sum_{p=0}^k \binom kp   \left( \nabla^p (\overline{v-v_1})\nabla^{k-p}v^2 + \sum_{q=0}^p \binom pq  (\nabla^q(v-v_1)\nabla^{p-q}(v+v_1) ) \nabla^{k-p}\overline{v_1} \right).
\end{align*} 
Therefore, using the preliminary computations:
\begin{align}
&\sup_{t_0\leq t } t^\mu \| \nabla^kI (v)(t)\|_{L^2} \\
\leq &  \frac{ 2M}{\mu} \|v-v_1\|_S + \sup_{t_0\leq t}t^\mu\int_t^\infty  \left( \sum_{p=0}^k  \binom kp  \|\nabla^p (v-v_1)\|_{L^2}\|\nabla^{k-p}v^2\|_{L^\infty} \right. \label{caspart} \\
&+ \left. \sum_{p=0}^k \sum_{q=0}^p \binom kp  \binom pq  \|\nabla^q(v-v_1)\|_{L^2}  \|\nabla^{k-p}v_1\|_{L^\infty}(\|\nabla^{p-q}v\|_{L^\infty} + \|\nabla^{p-q} v_1\|_{L^\infty} ) \right) \frac {d\tau}\tau \nonumber \\
\lesssim& \frac{2M}\mu \|v-v_1\|_S + \sup_{t_0\leq t}t^\mu \int_t^\infty\sum_{p=0}^k \frac{\|v-v_1\|_S }{\tau^{1+\mu}}  \sum_{j=0}^{k-p}\binom {k-p} j \left(\|\nabla^jA\|_{L^\infty} +  \frac{\|\nabla^ju_+\|_{L^1}}{\sqrt \tau} + \frac{\delta}{\tau^\mu} \right)\\
& {\left( \|\nabla^{k-p-j}A\|_{L^\infty} +  \frac{\|\nabla^{k-p-j}u_+\|_{L^1}}{\sqrt \tau } + \frac{\delta}{\tau^\mu} \right)} \nonumber\\
& + \sup_{t_0\leq t}t^\mu \int_t^\infty \sum_{p=0}^k\sum_{q=0}^p \frac{\|v-v_1\|_S}{\tau^{1+\mu}} \left( \|\nabla^{k-p}A\|_{L^\infty} +  \frac{\|\nabla^{k-p}u_+\|_{L^1}}{\sqrt \tau} \right) \nonumber \\
&\quad \quad \quad \left( \|\nabla^{p-q}A\|_{L^\infty} +  \frac{\|\nabla^{p-q}u_+\|_{L^1}}{\sqrt \tau} +\frac{\delta}{\tau^\mu} + \|\nabla^qA\|_{L^\infty} +  \frac{\|\nabla^qu_+\|_{L^1}}{\sqrt \tau}  \right) d\tau  \nonumber.
\end{align}
We are now left with integrations: 
\begin{align*}
\sup_{t_0\leq t } t^\mu \| \nabla^kI (v)(t)\|_{L^2} \leq & \|v-v_1\|_S \left( \frac{ 2M}\mu +\left(\frac{ \|\alpha_j \|_{l^{2,q}} ^2 }{\mu} + \frac {C(t_0,\|\alpha_j\|_{l^{2,s}})}{(\mu+2\gamma)t_0^{2\gamma}} +\frac {\|u_+\|_{W^{1,s}}}{\sqrt t_0 (\frac12+\mu)} + \frac{C\delta}{2\mu t_0^{2\mu}} \right)^2 \right.\\
& +\left(\frac{ \|\alpha_j \|_{l^{2,q}}  }{\mu} + \frac 1{(\mu+\gamma)t_0^\gamma} +\frac {\|u_+\|_{W^{1,s}}}{\sqrt {t_0} (\frac12+\mu)} + \frac{\delta}{2\mu t_0^{2\mu}} \right)\\
&\left. \quad \quad \quad  \left(\frac{ \|\alpha_j\|_{l^{2,q}}  }{\mu} + \frac {C(t_0,\|\alpha_j\|_{l^{2,s}})}{(\mu+\gamma)t_0^\gamma} +\frac {\|u_+\|_{W^{1,s}}}{\sqrt t_0 (\frac12+\mu)} \right)  \right)  ,
\end{align*}
which implies the conclusion of the Lemma for $k\in \llbracket 1, s-1 \rrbracket  $.\par
For $k=s$, we proceed similarly but instead of estimating 
\[
\sup_{t_0\leq t}t^\mu \int_t^\infty  \|v-v_1\|_{L^2}\|v\|_{L^\infty}\|\nabla^sv\|_{L^\infty}\frac{d\tau}\tau
\]
in \eqref{caspart} by using Gagliardo--Nirenberg \eqref{GN} on $\|\nabla^sv\|_{L^\infty}$, we rather write 
\[
\sup_{t_0\leq t}t^\mu \int_t^\infty ( \|v-v_1\|_{L^\infty}\|v\|_{L^\infty}\|\nabla^s(v-v_1)\|_{L^2}+ \|v-v_1\|_{L^2}\|v\|_{L^\infty} \|\nabla^sv_1\|_{L^\infty})\frac{d\tau}\tau.
\]
Similarly, instead of estimating 
\[
\sup_{t_0\leq t}t^\mu \int_t^\infty  \|v-v_1\|_{L^\infty}\|v_1\|_{L^2}\|\nabla^sv\|_{L^\infty}\frac{d\tau}\tau, 
\]
in \eqref{caspart}, we rather write 
\[
\sup_{t_0\leq t}t^\mu \int_t^\infty ( \|v-v_1\|_{L^\infty}\|v_1\|_{L^\infty}\|\nabla^s(v-v_1)\|_{L^2}+ \|v-v_1\|_{L^2}\|v_1\|_{L^\infty} \|\nabla^sv_1\|_{L^\infty})\frac{d\tau}\tau,
\]
and obtain a similar bound.
\end{proof}

\begin{lemma}[Estimation of $\nabla^k J_a$]
Let $J_a$ be defined by \eqref{Ja}:
\[
J_a(t,x)=-2i \displaystyle\int_{t}^\infty e^{i(t-\tau)\partial_x^2}  \sum_{p\neq j} e^{-i(|\alpha_p|^2-|\alpha_j|^2)\ln \tau}e^{i(p-j)\frac x2} \overline{\alpha_p}\alpha_j e^{i\frac{j^2-p^2}4 \tau}  e^{i\tau\partial_x^2}u_+\frac{d\tau}{\tau},
\]
then, for $k\in\llbracket0,s\rrbracket$,
\[
\sup_{t_0<t} t^\mu \|\nabla^kJ_a\|_{L^2} \leq C(\|\alpha_j\|_{l^{2,k}}) \sup_{t_0\leq t}\frac {t^\mu }t \left \| \frac{\widehat{u_+}}{\cdot+\mathbb Z/2} \right\|_{H^k} .
\]
\end{lemma}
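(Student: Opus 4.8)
The plan is to move to the spatial Fourier variable, where the Schr\"odinger groups $e^{i(t-\tau)\partial_x^2}$, $e^{i\tau\partial_x^2}$ and the modulation $e^{i(p-j)x/2}$ act as explicit multipliers, and then to gain $\tau$-decay by a single integration by parts. First I would fix a pair $(p,j)$ with $p\neq j$ and collect all the $\tau$-dependent phases of the corresponding summand of $J_a$. Using $\widehat{e^{i(p-j)x/2}f}(\eta)=\widehat f(\eta-\tfrac{p-j}{2})$, these are $e^{-i(t-\tau)\eta^2}$, the shift $e^{-i\tau(\eta-\frac{p-j}{2})^2}$ coming from $e^{i\tau\partial_x^2}u_+$, and the explicit $e^{i\frac{j^2-p^2}{4}\tau}$; the identity $\eta^2-\big(\eta-\tfrac{p-j}{2}\big)^2+\tfrac{j^2-p^2}{4}=(p-j)\big(\eta-\tfrac p2\big)$ then yields
\[
\widehat{J_a}(t,\eta)=-2i\sum_{p\neq j}\overline{\alpha_p}\alpha_j\,e^{-it\eta^2}\,\widehat{u_+}\!\Big(\eta-\tfrac{p-j}{2}\Big)\int_t^\infty e^{i\tau(p-j)(\eta-\frac p2)}\,\tau^{-1-i(|\alpha_p|^2-|\alpha_j|^2)}\,d\tau .
\]
Since $p\neq j$, the $\tau$-oscillation is non-degenerate except at the single frequency $\eta=p/2$, at which $\widehat{u_+}$ is evaluated at the half-integer $j/2$ — and this is exactly the point at which \eqref{fracu} provides the room we need.

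Next I would integrate by parts once in $\tau$ using $\partial_\tau e^{i\tau(p-j)(\eta-\frac p2)}=i(p-j)(\eta-\tfrac p2)\,e^{i\tau(p-j)(\eta-\frac p2)}$: the boundary term at $+\infty$ vanishes, while the boundary term at $t$ and the new integral (now with $\tau^{-2-i(\cdots)}$) are each $O(1/t)$ uniformly in $\eta$, the factor $\tau^{-i(|\alpha_p|^2-|\alpha_j|^2)}$ only costing a bounded constant because $|\alpha_p|^2+|\alpha_j|^2\le M$. This brings down $\tfrac1{(p-j)(\eta-p/2)}$, and after the change of variables $\zeta=\eta-\tfrac{p-j}{2}$ — under which $\eta-\tfrac p2=\zeta-\tfrac j2$ — the product $\widehat{u_+}(\eta-\tfrac{p-j}{2})/(\eta-\tfrac p2)$ becomes $\widehat{u_+}(\zeta)/(\zeta-\tfrac j2)$, precisely the function of \eqref{fracu} with $p$ replaced by $-j$. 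For $\nabla^k J_a$, $0\le k\le s$, I would apply $\nabla^k$ inside the integral and expand by Leibniz: $l$ derivatives land on $e^{i(p-j)x/2}$, producing $\big(\tfrac{i(p-j)}{2}\big)^l$, and the remaining $k-l$ produce $e^{i\tau\partial_x^2}\nabla^{k-l}u_+$, whose Fourier transform contributes $(i\zeta)^{k-l}\widehat{u_+}(\zeta)$. Running the same phase collapse and integration by parts, the $(p,j,l)$-term is, up to an absolute constant, $(p-j)^{l-1}\,(i\zeta)^{k-l}\,\frac{\widehat{u_+}(\zeta)}{\zeta-j/2}$ multiplied by an $O(1/t)$ factor bounded pointwise in $\zeta$. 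Taking $L^2_\zeta$ norms, controlling $\big\|\langle\cdot\rangle^{k-l}\,\widehat{u_+}/(\cdot-j/2)\big\|_{L^2}$ by $\big\|\widehat{u_+}/(\cdot+\mathbb Z/2)\big\|_{H^k}$ thanks to \eqref{fracu}, and summing $\sum_{l=0}^k\binom kl|p-j|^{l-1}\lesssim_k\langle p-j\rangle^{k-1}$ (for $p\neq j$), I obtain
\[
\|\nabla^k J_a(t)\|_{L^2}\ \lesssim_k\ \frac1t\,\Big\|\tfrac{\widehat{u_+}}{\cdot+\mathbb Z/2}\Big\|_{H^k}\sum_{p\neq j}|\alpha_p|\,|\alpha_j|\,\langle p-j\rangle^{k-1}.
\]
By Cauchy--Schwarz the last sum is finite and $\le C(\|\alpha_j\|_{l^{2,k}})$, since $q-s>\tfrac12$ makes $\sum_p|\alpha_p|$ and $\sum_j\langle j\rangle^{k-1}|\alpha_j|$ summable; multiplying by $t^\mu$ and taking $\sup_{t\ge t_0}$ gives the statement. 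The lone frequency $\eta=p/2$, where the last $\tau$-integral may fail to decay like $1/t$, forms a null set and is harmless, $\widehat{u_+}/(\cdot-j/2)$ being continuous there by \eqref{fracu}.

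The heart of the matter — and the step requiring the most care — is the phase bookkeeping: one must see that the three $\tau$-oscillations collapse to exactly $e^{i\tau(p-j)(\eta-p/2)}$, whose unique stationary frequency $\eta=p/2$ is the half-integer at which the argument of $\widehat{u_+}$ lands, so that the factor $1/(\eta-p/2)$ inevitably produced by the integration by parts is precisely absorbed by hypothesis \eqref{fracu}; this must moreover be carried out uniformly in the Leibniz index $l$, with the resulting powers of $(p-j)$ kept summable against the weighted $\ell^2$ norm of $(\alpha_j)$. Once this mechanism is in place the remaining estimates are routine, and the $1/t$ decay obtained here (rather than the limiting $1/\sqrt t$) confirms that $J_a$ is one of the easier source terms.
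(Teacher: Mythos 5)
Your proof is correct and follows essentially the same route as the paper: Fourier transform in $x$, collapse of the three $\tau$-phases to the linear phase $(p-j)(\xi\mp\tfrac p2)$, one integration by parts in $\tau$ producing the $1/t$ decay and the denominator absorbed by hypothesis \eqref{fracu} via the shifted functions $\widehat{u_+}(\cdot)/(\cdot\pm\tfrac j2)$, then $L^2$ and summation over $(p,j)$ using the weighted $\ell^2$ norm of $(\alpha_j)$. The only deviations (explicit Leibniz bookkeeping of the $(p-j)^l$ factors and an opposite sign in the stationary frequency coming from the Fourier convention) are immaterial.
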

\begin{proof}
The first step is to perform a Fourier transform on the space variable:
\begin{align*}
\nabla^k J_a (t,x)
=&e^{it\partial_x^2} \int_t^\infty \int_{\mathbb R} (i\xi)^k e^{i\tau\xi^2} e^{ix\xi} \sum_{p\neq j} \overline{\alpha_p}\alpha_j e^{-i(|\alpha_p|^2-|\alpha_j|^2)\ln \tau} e^{-i\tau(\xi-\frac{j-p}2)^2}\\
&\times \widehat{u_+}(\xi-\frac{j-p}2)e^{i\frac{j^2-p^2}4\tau}d\xi \frac{d\tau}\tau \\
=&e^{it\partial_x^2} \sum_{p\neq j}\overline{\alpha_p}\alpha_j  \int_{\mathbb R} (i\xi)^k e^{ix\xi} \widehat{u_+}(\xi-\frac{j-p}2)  \int_t^\infty e^{i\tau (j-p)(\frac{j+p}4 + \xi -\frac {j-p}4 )} e^{-i(|\alpha_p|^2-|\alpha_j|^2)\ln \tau}   \frac{d\tau}\tau d\xi
\end{align*}
As $p\neq j$, for all $\xi$ except $\xi=\frac p2$ we now perform an integration by parts on the variable $\tau$ to gain integrability: 
\begin{align*}
 &\int_t^\infty e^{i\tau (j-p)(\xi- \frac{j-p}2+\frac j2 )}  e^{-i(|\alpha_p|^2-|\alpha_j|^2)\ln \tau}   \frac{d\tau}\tau \\
 =&- \frac{ e^{it (j-p)(\xi- \frac{j-p}2 +\frac j2 )}  }{i (j-p)(\xi- \frac{j-p}2 +\frac j2 )} \frac{e^{-i(|\alpha_p|^2-|\alpha_j|^2)\ln t}   }t + \int_t^\infty\frac{ e^{i\tau (j-p)(\xi- \frac{j-p}2 +\frac j2 )}  }{i (j-p)(\xi- \frac{j-p}2 +\frac j2 )} \left( \frac{ e^{-i(|\alpha_p|^2-|\alpha_j|^2)\ln \tau}}\tau \right)_\tau  d\tau.\\
\end{align*}
Undoing the Fourier transform, we have:
\begin{align*}
\nabla^kJ_a(t,x)= &\frac {e^{it\partial_x^2}i}t \sum_{p\neq j} \frac{e^{-i(|\alpha_p|^2-|\alpha_j|^2)\ln t}}{j-p} \overline{\alpha_p}\alpha_j e^{i\frac{j^2-p^2}4 t}  \nabla^k [e^{i(p-j)\frac x2}e^{it\partial_x^2}u_+^j (x)]\\
&+ \frac{e^{it\partial_x^2}}i\int_t^\infty  \sum_{p\neq j} \left( \frac{e^{-i(|\alpha_p|^2-|\alpha_j|^2)\ln \tau}}{\tau}\right)_\tau \overline{\alpha_p}\alpha_j e^{i\frac{j^2-p^2}4 \tau}  \nabla^k [e^{i(p-j)\frac x2}e^{i\tau\partial_x^2}u_+^j (x)] d\tau,
\end{align*}
where 
\begin{equation}\label{upj}
\widehat{u_+^j} (\xi) = \frac {\widehat{u_+}(\xi)}{\xi+\frac j2}.
\end{equation}
To conclude, we now take the $L^2$ norm and use the fact that $(\alpha_j)\in l^{2,k}$ to obtain:
\[
\sup_{t_0<t} t^\mu \|\nabla^kJ_a\|_{L^2} \leq C(\|\alpha_j\|_{l^{2,k}}) \sup_{t_0\leq t}\frac {t^\mu }t \left\| \frac{\widehat{u_+}(\cdot) }{\cdot+\mathbb Z/2}\right\|_{H^k} .
\]
\end{proof}
\begin{lemma}[Estimation of $\nabla^k J_b$]
Let $J_b$ be defined by \eqref{Jb}:
\begin{align*}
J_b=&-2i \displaystyle\int_{t}^\infty e^{i(t-\tau)\partial_x^2}  \sum_{(p,j)\in \mathbb Z^2} e^{-i(|\alpha_p|^2-|\alpha_j|^2)\ln \tau+i(p-j)\frac x2} (\overline{\alpha_p}R_j + \overline{R_p}\alpha_j +\overline{R_p}R_j)\\
&\times e^{i\frac{j^2-p^2}4 \tau}e^{i\tau\partial_x^2}u_+\frac{d\tau}{\tau},
\end{align*}
 then, for $k\in\llbracket0,s\rrbracket$,
\[
\sup_{t_0\leq t} t^\mu \|\nabla^kJ_b(t)\|_{L^2} \leq C( t_0,\|\alpha_j\|_{l^{2,k}} )  \sup_{t_0\leq t} \frac {t^\mu}{t^\gamma} \|u_+\|_{H^k}.
\] 
\end{lemma}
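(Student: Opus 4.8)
The plan is to treat $J_b$ as one of the benign source terms. The decisive point is that, in contrast with $J_a$, the amplitude here always carries a factor $R_p$ or $R_j$, and by \eqref{borneR} read through the pseudo-conformal change of variable one has $\|R_j(1/\tau)\|_{l^{2,q}}\lesssim \tau^{-\gamma}$ for $\tau\ge t_0$ (choosing $t_0\ge 1/T$). This decay already makes the $\tau$-integral converge, so no integration by parts in $\tau$ is needed; in particular no spectral shift is generated and the fractional-division hypothesis \eqref{fracu} is not used here --- only $u_+\in H^k$ enters.

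First I would discard the unimodular factors $e^{-i(|\alpha_p|^2-|\alpha_j|^2)\ln\tau}$ and $e^{i(j^2-p^2)\tau/4}$ and apply the inhomogeneous $L^\infty_tL^2_x$ Strichartz estimate (equivalently, use unitarity of $e^{i(t-\tau)\partial_x^2}$ together with Minkowski's inequality), noting that $\nabla^k$ commutes with the free evolution. This reduces the claim to bounding
\[
\sup_{t_0\le t}t^\mu\int_t^\infty\Big\|\nabla^k\!\Big[\sum_{(p,j)\in\mathbb Z^2}e^{i(p-j)\frac x2}\big(\overline{\alpha_p}R_j(\tfrac1\tau)+\overline{R_p}(\tfrac1\tau)\alpha_j+\overline{R_p}(\tfrac1\tau)R_j(\tfrac1\tau)\big)e^{i\tau\partial_x^2}u_+\Big]\Big\|_{L^2}\frac{d\tau}{\tau}.
\]
Expanding $\nabla^k$ by Leibniz, the plane wave $e^{i(p-j)x/2}$ produces polynomial factors in $(p-j)$ while $\|\nabla^\ell e^{i\tau\partial_x^2}u_+\|_{L^2}=\|\nabla^\ell u_+\|_{L^2}$ for $\ell\le k$, so the $(p,j)$-summand has $L^2$ norm $\lesssim(1+|p-j|)^k\|u_+\|_{H^k}$.

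It then remains to sum over $(p,j)\in\mathbb Z^2$, including the diagonal $p=j$ (harmless here, the phases being trivial). Using $1+|p-j|\le(1+|p|)(1+|j|)$ to split the weight, the double sum of the three amplitude products is controlled by products of $\sum_p(1+|p|)^k|\alpha_p|$ and $\sum_p(1+|p|)^k|R_p(1/\tau)|$; each is finite by Cauchy--Schwarz precisely because $q-k\ge q-s>\tfrac12$, so that $\sum_p(1+|p|)^k|\alpha_p|\lesssim\|\alpha_j\|_{l^{2,q}}$ and $\sum_p(1+|p|)^k|R_p(1/\tau)|\lesssim\|R_j(1/\tau)\|_{l^{2,q}}\lesssim\tau^{-\gamma}$. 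Hence the bracketed sum has $L^2$ norm $\lesssim(\|\alpha_j\|_{l^{2,q}}\tau^{-\gamma}+\tau^{-2\gamma})\|u_+\|_{H^k}\le C(t_0,\|\alpha_j\|_{l^{2,q}})\tau^{-\gamma}\|u_+\|_{H^k}$ for $\tau\ge t_0$, and $\int_t^\infty\tau^{-\gamma}\,\frac{d\tau}{\tau}=\gamma^{-1}t^{-\gamma}$ gives the asserted bound (the constant being rewritten as in the statement, since $k\le q$). The only mild bookkeeping point is the bilinear $\overline{R_p}R_j$ term, one of whose factors $\tau^{-\gamma}$ is spent absorbing the $t_0$-dependence; there is no genuine analytic obstacle --- this term is a softer variant of $J_a$ in which the integration by parts is replaced by the built-in decay of the $R_j$.
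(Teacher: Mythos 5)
Your proposal is correct and follows essentially the same route as the paper: no integration by parts, just unitarity/Minkowski to pass the $L^2$ norm inside the $\tau$-integral, the decay $\|R_j(1/\tau)\|_{l^{2,q}}\lesssim \tau^{-\gamma}$ from \eqref{borneR} to make the integral converge, and Cauchy--Schwarz with $q-k>\tfrac12$ to sum the weighted $(p,j)$-series against $\|u_+\|_{H^k}$. Your write-up merely makes explicit the Leibniz and summation bookkeeping that the paper leaves implicit.
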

\begin{proof}
This proof does not require any integration by parts. We only exploit the integrability provided by the decay \eqref{borneR} of the $R_j$'s: 
\begin{align*}
\| \nabla^kJ_b(t) \|_{L^2_x} \leq &\int_t^\infty   \sum_{(p,j)\in \mathbb Z^2} \| e^{-i(|\alpha_p|^2-|\alpha_j|^2)\ln \tau}(\overline{\alpha_p}R_j + \overline{R_p}\alpha_j +\overline{R_p}R_j)\\
&\times e^{i\frac{j^2-p^2}4 \tau}   \nabla^k[e^{i(p-j)\frac x2} e^{i\tau\partial_x^2} \widehat{u_+}(x)] \|_{L^2_x} \frac{d\tau}\tau\\
\leq & \int_t^\infty   \sum_{(p,j)\in \mathbb Z^2} C(t_0 \|\alpha_j\|_{l^{2,k}} ) \|u_+\|_{H^k} \frac{d\tau}{\tau^{1+\gamma}},
\end{align*}
so that
\[
\sup_{t_0\leq t} t^\mu \|\nabla^kJ_b(t)\|_{L^2} \leq C(t_0, \|\alpha_j\|_{l^{2,k}}  )  \sup_{t_0\leq t} \frac {t^\mu}{t^\gamma} \|u_+\|_{H^k}.
\] 
\end{proof}
\begin{lemma}[Estimation of $\nabla^k J_c$] Let $J_c$ be defined by \eqref{Jc}:
\[
J_c= - i  \displaystyle\int_{t}^\infty e^{i(t-\tau)\partial_x^2} A^2 e^{-i\tau\partial_x^2}\overline{u_+} \frac{d\tau}{\tau},
\]
 then, for $k\in\llbracket0,s\rrbracket$,
\[
\sup_{t_0\leq t}t^\mu\|\nabla^kJ_c\|_{L^2}\leq C( t_0,\|\alpha_j\|_{l^{2,k}} ) \left(\sup_{t_0<t} \frac{t^\mu}t \left\| \frac{\widehat{u_+}(\cdot) }{\cdot+\mathbb Z/2} \right\|_{H^{k-1}}+ \sup_{t_0<t} \frac{t^\mu}{t^\gamma} \|u_+\|_{H^k}  \right).
\]
\end{lemma}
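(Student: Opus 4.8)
The strategy is the one used for $\nabla^k J_a$ and $\nabla^k J_b$, after splitting $A=A_\alpha+A_R$, where $A_\alpha(\tau,x)=\sum_{j}e^{-i|\alpha_j|^2\ln\tau}\,\overline{\alpha_j}\,e^{-i\tau j^2/4+ixj/2}$ is the leading profile and $A_R$ gathers the $R_j(1/\tau)$ terms, so that $A^2=A_\alpha^2+\bigl(2A_\alpha A_R+A_R^2\bigr)$. The $A_R$-contributions are handled exactly as for $\nabla^k J_b$: by \eqref{borneR}, $\|\nabla^\ell A_R(\tau)\|_{L^\infty}\lesssim C(t_0,\|\alpha_j\|_{l^{2,\ell}})\,\tau^{-\gamma}$ for $\ell\le s$, so that, by the Leibniz rule, the commutation of $\nabla$ with $e^{-i\tau\partial_x^2}$ and the unitarity of the Schrödinger group, $\bigl\|\nabla^k\bigl[(2A_\alpha A_R+A_R^2)e^{-i\tau\partial_x^2}\overline{u_+}\bigr]\bigr\|_{L^2}\lesssim C(t_0,\|\alpha_j\|_{l^{2,k}})\,\tau^{-\gamma}\,\|u_+\|_{H^k}$, and $\int_t^\infty\tau^{-\gamma-1}d\tau\lesssim t^{-\gamma}$ produces the second summand of the claimed estimate.

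The heart of the matter is $-i\int_t^\infty e^{i(t-\tau)\partial_x^2}\bigl[A_\alpha^2\,e^{-i\tau\partial_x^2}\overline{u_+}\bigr]\tfrac{d\tau}\tau$. As for $\nabla^k J_a$ I would expand $A_\alpha^2=\sum_{p,j}\overline{\alpha_p\alpha_j}\,e^{-i(|\alpha_p|^2+|\alpha_j|^2)\ln\tau}\,e^{-i\tau(p^2+j^2)/4}\,e^{i(p+j)x/2}$, take the Fourier transform in $x$, and use that multiplication by $e^{i(p+j)x/2}$ merely shifts the Fourier variable. The one new feature with respect to $J_a$ is that $\overline{u_+}$ is conjugated, so the Schrödinger phase carried by $e^{-i\tau\partial_x^2}\overline{u_+}$ has the \emph{same} sign as that of the Duhamel propagator rather than cancelling it; since $\xi^2-\tfrac{p^2+j^2}4+\bigl(\xi-\tfrac{p+j}2\bigr)^2=2\bigl(\xi-\tfrac p2\bigr)\bigl(\xi-\tfrac j2\bigr)$, the $\tau$-oscillation is $e^{2i\tau(\xi-p/2)(\xi-j/2)}$ — quadratic in $\xi$ and vanishing at the \emph{two} points $\xi=p/2$ and $\xi=j/2$, with a double zero on the diagonal $p=j$.

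Away from those points I would integrate by parts once in $\tau$: using that $\bigl(\tfrac1\tau e^{-i(|\alpha_p|^2+|\alpha_j|^2)\ln\tau}\bigr)'=O(\tau^{-2})$, the boundary and bulk terms carry an overall $\tfrac1t$ times $\tfrac1{(\xi-p/2)(\xi-j/2)}$. The key observation is that the companion factor $\widehat{u_+}$ is evaluated (up to reflection) at $\tfrac{p+j}2-\xi$, hence vanishes at both $\xi=p/2$ and $\xi=j/2$ since $\widehat{u_+}$ vanishes on $\mathbb Z/2$ — this being forced by \eqref{fracu} with $k=0$. Thus the two poles are cancelled; decomposing $\tfrac1{(\xi-p/2)(\xi-j/2)}=\tfrac2{p-j}\bigl(\tfrac1{\xi-p/2}-\tfrac1{\xi-j/2}\bigr)$ for $p\neq j$ and invoking \eqref{fracu} for the resulting single divisions (the polynomial residue obtained by dividing $(i\xi)^k$ by the two linear factors, of degree $\le k-2$, contributing after summation to the $\|u_+\|_{H^k}$-term with a $t_0$-dependent constant) bounds each summand, after undoing the Fourier transform and using Plancherel, by $\tfrac1t$ times $\bigl\|\tfrac{\widehat{u_+}}{\cdot+\mathbb Z/2}\bigr\|_{H^{k-1}}$ up to a polynomial weight in $p,j$ — the loss of one derivative relative to $J_a$ being exactly the price of the second linear factor. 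Near the stationary points, and especially the diagonal double zero, the integration by parts is too singular and is replaced by a direct estimate of the $\tau$-integral, $\bigl|\int_t^\infty e^{2i\tau(\xi-p/2)^2}\tfrac{c_p(\tau)}\tau d\tau\bigr|\lesssim\min\bigl(1,\tfrac{|\alpha_p|^2}{t(\xi-p/2)^2}\bigr)$ with $c_p(\tau)=\overline{\alpha_p}^2e^{-2i|\alpha_p|^2\ln\tau}$, the first bound coming from the oscillation $e^{-2i|\alpha_p|^2\ln\tau}$ and the second from one integration by parts: on the diagonal the prefactor $\overline{\alpha_p}^2$ from $A_\alpha^2$ absorbs the $|\alpha_p|^{-2}$ of the unit-amplitude regime, while the vanishing of $\widehat{u_+}$ at the relevant point supplies the extra power of $|\xi-p/2|$ needed to close the $L^2$ bound with the right time decay. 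The polynomial weights in $p,j$ produced throughout are then summed against $|\alpha_p\alpha_j|$ by Cauchy-Schwarz, convergent thanks to $q-s>\tfrac12$ and $k\le s$, which yields the constant $C(t_0,\|\alpha_j\|_{l^{2,k}})$.

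The main obstacle is this quadratic $\tau$-phase: unlike in $J_a$, where the resonance with $e^{i\tau\partial_x^2}u_+$ linearized the phase so that a single integration by parts sufficed, the conjugation of $u_+$ forces a division by the \emph{product} $(\xi-p/2)(\xi-j/2)$; keeping it under control — in particular near the diagonal, where this product degenerates into a double zero — is what requires the vanishing of $\widehat{u_+}$ on $\mathbb Z/2$ from \eqref{fracu}, a separate near/far analysis of the stationary points, and what produces the weaker $H^{k-1}$ norm in the first term of the bound.
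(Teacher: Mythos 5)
Your splitting of $A$ into its $\alpha$-part and $R$-part, the treatment of the $R$-contributions as in $J_b$, the Fourier-side computation (your phase $2(\xi-\tfrac p2)(\xi-\tfrac j2)$ is exactly the paper's $2\xi(\xi+\tfrac{p+j}2)+\tfrac{pj}2$ up to the sign convention of the frequency shift), the single integration by parts in $\tau$, and the use of \eqref{fracu} through partial fractions for $p\neq j$ all follow the paper's own route; the paper encodes the resulting divisions in the functions $u_+^{pj}$ of \eqref{upj} and obtains the same $\tfrac1t\,\|\widehat{u_+}/(\cdot+\mathbb Z/2)\|_{H^{k-1}}$ bound, with the loss of one derivative for the same reason you give. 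Where you genuinely depart from the paper is the diagonal $p=j$: the paper keeps those terms inside the same integration by parts, whereas you replace it by a near/far analysis around the double zero of the phase.

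That is where your argument has a gap: it does not reach the decay claimed in the lemma. Your pointwise bound $\bigl|\overline{\alpha_p}^2\int_t^\infty e^{2i\tau(\xi-p/2)^2}e^{-2i|\alpha_p|^2\ln\tau}\tfrac{d\tau}\tau\bigr|\lesssim\min\bigl(1,\tfrac{|\alpha_p|^2}{t(\xi-p/2)^2}\bigr)$ is correct, but not quite for the reason you give: the $\ln$-oscillation alone yields $O(|\alpha_p|^{-2})$ only in the non-resonant regime, while near the stationary time $\tau\sim|\alpha_p|^2(\xi-p/2)^{-2}$ the integral is of size $|\alpha_p|^{-1}$, and the bound by a constant holds because $|\alpha_p|^2\cdot|\alpha_p|^{-1}\leq\sup_p|\alpha_p|$. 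More importantly, \eqref{fracu} supplies only \emph{one} factor of vanishing of $\widehat{u_+}$ at the relevant half-integer, so with $y=\xi-\tfrac p2$ the diagonal contribution to the $L^2_\xi$ norm is controlled by $\bigl\||y|\min\bigl(1,\tfrac{|\alpha_p|^2}{ty^2}\bigr)\bigr\|_{L^2_y}\sim |\alpha_p|^{3/2}t^{-3/4}$, the optimal splitting being at $|y|\sim|\alpha_p|t^{-1/2}$; no choice of the near/far threshold gives $t^{-1}$, because $|y|/y^2$ is not square integrable at $y=0$, i.e. a single vanishing cannot compensate the double pole. Hence your method proves the lemma only with an additional term of size $\sup_{t_0<t}t^{\mu}t^{-3/4}$ on the right-hand side, which is strictly weaker than the stated bound (whose terms decay like $t^{-1}$ and $t^{-\gamma}$ with $\gamma$ allowed arbitrarily close to $1$). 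Such a weakened estimate would still be sufficient downstream, since the limiting decay of the whole fixed-point scheme is the $t^{-1/2}$ coming from $J_d$ and one only needs $\mu\leq\min(\gamma,\tfrac12)$; but as a proof of the lemma as stated the diagonal case is not closed: to get $t^{-1}$ there you would need a genuine double division of $\widehat{u_+}$ at the point $p/2$ (which \eqref{fracu} does not provide), or else to justify the paper's own step, namely the integration by parts applied to the full sum with the doubly shifted divisor hidden in $u_+^{pj}$ for $p=j$.
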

\begin{proof}
The proof is in two steps, according to the following decomposition:
\begin{align*}
J_c = &- \int_t^\infty e^{i(t-\tau)\partial_x^2}  \sum_{(j,p)\in\mathbb Z^2} \overline{\alpha_p}\overline{\alpha_j} e^{-i\tau\frac{p^2+j^2}4+ix\frac{p+j}2} e^{-i(|\alpha_j|^2-|\alpha_p|^2)\ln \tau} e^{-i\tau\partial_x^2}\overline{u_+}\frac {d\tau}\tau\\
&-  \int_t^\infty e^{i(t-\tau)\partial_x^2}    \sum_{(j,p)\in\mathbb Z^2}  \left(\overline{\alpha_p}\overline{R_j} + \overline{R_p}\overline{\alpha_j} +\overline{R_p}\overline{R_j}\right) e^{-i\tau\frac{p^2+j^2}4+ix\frac{p+j}2} e^{-i(|\alpha_j|^2+|\alpha_p|^2)\ln \tau} \\
&\times e^{-i\tau\partial_x^2}\overline{u_+}\frac {d\tau}\tau\\
=&-J_{c_1}- J_{c_2}
\end{align*}
The diagonal term (when $j=p$) of $J_{c_1}$ could be controlled with inhomogeneous Strichartz with the couple $(4,\infty)$ of dual $(\frac43,\infty)$, but we rather use IBP in Fourier space on the whole term to obtain a better time decay:
\begin{align*}
\nabla^k J_{c_1}=&\nabla^k\int_t^\infty e^{i(t-\tau)\partial_x^2}  \sum_{(j,p)\in\mathbb Z^2} \overline{\alpha_p}\alpha_j e^{-i\tau \frac{p^2+j^2}4+ix\frac{p+j}2} e^{-i(|\alpha_j|^2+|\alpha_p|^2)\ln \tau}  e^{-i\tau\partial_x^2}\overline{u_+}\frac {d\tau}\tau\\
=&e^{-it\partial_x^2}\int_t^\infty \int_{\mathbb R} (i\xi)^k e^{i\tau\xi^2} e^{ix\xi}   \sum_{(j,p)\in\mathbb Z^2} \overline{\alpha_p}\alpha_j e^{-i\tau\frac{p^2+j^2}4} e^{-i(|\alpha_j|^2+|\alpha_p|^2)\ln \tau} e^{i\tau(\xi + \frac{p+j}2)^2}\\
&\times \widehat{\overline{u_+}}(\xi + \frac{p+j}2) d\xi \frac {d\tau}\tau\\
= &e^{-it\partial_x^2}   \sum_{(j,p)\in\mathbb Z^2} \overline{\alpha_p}\alpha_j  \int_{\mathbb R} (i\xi)^k e^{ix\xi} \widehat{\overline{u_+}}(\xi + \frac{p+j}2)\\
&\times \int_t^\infty e^{-i\tau\frac{p^2+j^2}4-i(|\alpha_j|^2+|\alpha_p|^2)\ln \tau +i\tau \frac{(p+j)^2}4}e^{2 i\tau \xi^2} e^{i\tau\xi (p+j)}\frac {d\tau}\tau d\xi \\
= &e^{-it\partial_x^2}   \sum_{(j,p)\in\mathbb Z^2} \overline{\alpha_p}\alpha_j  \int_{\mathbb R} (i\xi)^k e^{ix\xi} \widehat{\overline{u_+}}(\xi + \frac{p+j}2)\\
&\times \int_t^\infty e^{-i(|\alpha_j|^2+|\alpha_p|^2)\ln \tau }e^{ i\tau(2 \xi(\xi+\frac{p+j}2) +\frac {pj}2)}\frac {d\tau}\tau d\xi 
\end{align*}
We now perform for any $\xi$ -- except the countably set of roots of the phase -- an IBP on the time integral:
\begin{align*}
  &\int_t^\infty e^{-i(|\alpha_j|^2+|\alpha_p|^2)\ln \tau }e^{ i\tau (2\xi(\xi+\frac{p+j}2) +\frac {pj}2)} \frac {d\tau}\tau \\
=& -\frac{e^{-i(|\alpha_j|^2+|\alpha_p|^2)\ln t }e^{ it (2\xi(\xi+\frac{p+j}2) +\frac {pj}2)} }{(2 i \xi(\xi+\frac{p+j}2) +\frac {pj}2 )t}
+ \int_t^\infty \frac{e^{ i\tau( 2\xi(\xi+\frac{p+j}2) +\frac {pj}2)} }{2 \xi(\xi+\frac{p+j}2) +\frac {pj}2} \left( \frac{e^{-i(|\alpha_j|^2+|\alpha_p|^2)\ln \tau}}{\tau} \right)_\tau d\tau.
\end{align*}
Undoing the Fourier transform, we have:
\begin{align*}
\nabla^k J_{c_1} =& \frac{ie^{it\partial_x^2}}{2t} \sum_{(j,p)\in\mathbb Z^2} \overline{\alpha_j}\overline{\alpha_p} e^{-it\frac{p^2+j^2}4} e^{-i(|\alpha_j|^2-|\alpha_p|^2)\ln t} \nabla^{k-1}[e^{it\frac{(p+j)}2x} e^{-it\partial_x^2}\overline{u_+^{pj}} (x)] \\
&+\frac{e^{it\partial_x^2}}{2i} \sum_{(j,p)\in\mathbb Z^2} \int_t^\infty \overline{\alpha_j}\overline{\alpha_p} e^{-i\tau\frac{p^2+j^2}4}  \left( \frac{e^{-i(|\alpha_j|^2+|\alpha_p|^2)\ln \tau}}{\tau} \right)_\tau \nabla^{k-1}[e^{i\tau\frac{(p+j)}2x} e^{-i\tau\partial_x^2}\overline{u_+^{pj}} (x)] .
\end{align*}
where $u_+^{pj}$ has been defined by \eqref{upj}.
As for $J_a$, we conclude with this term by taking the $L_2$ norm on the space variable and use the fact that $(\alpha_j)\in l^{2,q}$:
\[
\sup_{t_0<t} t^\mu \|\nabla^kJ_{c_1}\|_{L^2_x} \leq C(\|\alpha_j\|_{l^{2,k}}) \sup_{t_0<t}\frac{ t^\mu}t  \left\| \frac{\widehat{u_+}(\cdot) }{\cdot+\mathbb Z/2} \right\|_{H^{k-1}}.
\]
Finally, we estimate $J_{c_2}$ as for $J_b$ using \eqref{borneR}:
\begin{align*}
\sup_{t_0<t} t^\mu \|\nabla^kJ_{c_2}\|_{L^2_x} \leq &C(t_0, \|\alpha_j\|_{l^{2,k}}) \sup_{t_0<t} t^\mu \int_t^\infty \frac 1{\tau^\gamma} \left \| \nabla^k \overline{u_+}  \right \|_{L^2} \frac{d\tau }\tau \\
\leq &C( t_0,\|\alpha_j\|_{l^{2,k}}  ) \|u_+\|_{H^k} \sup_{t_0<t} \frac{t^\mu}{t^\gamma} ,
\end{align*}
hence:
\[
\sup_{t_0\leq t}t^\mu\|\nabla^kJ_c\|_{L^2}\leq C( t_0,\|R_j\|_{l^{2,k}} ) \left(\sup_{t_0<t} \frac{t^\mu}t \left\| \frac{\widehat{u_+}(\cdot) }{\cdot+\mathbb Z/2} \right\|_{H^{k-1}}+ \sup_{t_0<t} \frac{t^\mu}{t^\gamma} \|u_+\|_{H^k}  \right).
\]
\end{proof}

\begin{lemma}[Estimation of $\nabla^kJ_d$] Let $J_d$ be defined by \eqref{Jd}:
\[
J_d=-i\displaystyle\int_{t}^\infty e^{i(t-\tau)\partial_x^2} (2 A|e^{i\tau\partial_x^2}u_+|^2 + \overline{A}(e^{i\tau\partial_x^2}u_+)^2 )\frac{d\tau}{\tau},
\]
then, for $k\in\llbracket0,s\rrbracket$,
\[
\sup_{t_0\leq t}  t^\mu \|\nabla^k J_{d}\|_{L^2} \leq  C(t_0,\|\alpha_j\|_{l^{2,k}} )\sup_{t_0\leq t} \frac {t^\mu}{t^{\frac12}} \|u_+\|_{L^1}\|u_+\|_{H^k}.
\]
\end{lemma}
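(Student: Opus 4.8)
The plan is to follow the same template as for the previous source terms, but here we cannot gain a logarithmic-type or $\tau^{-\gamma}$ decay: the decay will come entirely from the dispersive estimate \eqref{disp} applied to the two factors $e^{i\tau\partial_x^2}u_+$ and there are only two of them, so we expect to lose exactly $\tau^{-1/2\cdot 2}\cdot\tau^{-1}$... no, only one factor can be placed in $L^\infty$ while the other stays in $L^2$, giving the borderline $\tau^{-1/2}$. First I would apply the inhomogeneous Strichartz estimate with the admissible pair $(\infty,2)$ to reduce to bounding, for each $k\in\llbracket 0,s\rrbracket$,
\[
\sup_{t_0\leq t} t^\mu \int_t^\infty \left\| \nabla^k\bigl( 2A|e^{i\tau\partial_x^2}u_+|^2 + \overline A (e^{i\tau\partial_x^2}u_+)^2\bigr)\right\|_{L^2}\frac{d\tau}{\tau}.
\]
Then I would expand the $k$-th derivative with the Leibniz rule, distributing the derivatives among the (at most three) factors $A$, $e^{i\tau\partial_x^2}u_+$, $\overline{e^{i\tau\partial_x^2}u_+}$ (and their conjugates), producing a finite sum of terms each of the form $\nabla^{k_1}A \cdot \nabla^{k_2}(e^{i\tau\partial_x^2}u_+)\cdot \nabla^{k_3}(e^{i\tau\partial_x^2}\overline{u_+})$ with $k_1+k_2+k_3=k$.

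For each such trilinear term I would keep one of the two Schrödinger factors in $L^2$ and the other in $L^\infty$, and keep $\nabla^{k_1}A$ in $L^\infty$. Using that $\nabla^{k}$ commutes with $e^{i\tau\partial_x^2}$, the dispersion estimate \eqref{disp} gives $\|\nabla^{k_i}(e^{i\tau\partial_x^2}u_+)\|_{L^\infty}\leq \tau^{-1/2}\|\nabla^{k_i}u_+\|_{L^1}\leq \tau^{-1/2}\|u_+\|_{W^{1,s}}$ for $k_i\leq s$ (here $\|u_+\|_{W^{1,s}}$ already controls $\|\nabla^j u_+\|_{L^1}$ for $j\leq s$ by the embedding $W^{1,s}\hookrightarrow W^{j,1}$ on the relevant range, which is where the $W^{1,s}$ hypothesis in the Theorem is used), while the other Schrödinger factor contributes $\|\nabla^{k_i}u_+\|_{L^2}\leq \|u_+\|_{H^k}$, and $\|\nabla^{k_1}A\|_{L^\infty}$ is bounded by $C(\|\alpha_j\|_{l^{2,k}})$ together with a $\tau$-independent remainder coming from \eqref{borneR} (since $q-k>\frac12$, exactly as in the estimates of $\nabla^k I$ and of $J_a$). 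Collecting, the integrand is bounded by $C(t_0,\|\alpha_j\|_{l^{2,k}})\,\|u_+\|_{L^1}\|u_+\|_{H^k}\,\tau^{-1/2}\cdot\tau^{-1}$, and the integral $\int_t^\infty \tau^{-3/2}\,d\tau = 2t^{-1/2}$ yields
\[
\sup_{t_0\leq t} t^\mu\|\nabla^k J_d\|_{L^2} \leq C(t_0,\|\alpha_j\|_{l^{2,k}})\sup_{t_0\leq t}\frac{t^\mu}{t^{1/2}}\|u_+\|_{L^1}\|u_+\|_{H^k},
\]
which is the claim.

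\textbf{Main obstacle.} Everything is routine except one bookkeeping point that deserves care: the absence of any extra decay means this term dictates the exponent in \eqref{decaynls}, so the constant $\mu$ must be chosen $<\tfrac12$, and one must check that the $L^\infty$ bound on $A$ (with all its $R_j$ pieces and the phases $e^{i\frac{j^2-p^2}{4}\tau}e^{i(p-j)x/2}$ summed over $j$) is genuinely uniform in $\tau$ and $x$ — this is where $q-k>\frac12$ is essential, via Cauchy–Schwarz on $\sum_j(1+|j|)^{-2(q-k)}<\infty$, and it is the same estimate used implicitly in the earlier lemmas, so no new difficulty arises beyond tracking that the constant depends only on $t_0$ and $\|\alpha_j\|_{l^{2,k}}$ (equivalently $\|\alpha_j\|_{l^{2,s}}$, since $k\leq s$) and is in particular independent of $\delta$ and of $v$.
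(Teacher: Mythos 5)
Your argument follows essentially the same route as the paper: bound the Duhamel integral in $L^2$ (unitarity/Strichartz with $(\infty,2)$), expand $A$ and control the sum over $j$ by $\|\alpha_j\|_{l^{2,k}}$ together with \eqref{borneR} (using $q-k>\frac12$ for the powers of $j$ coming from the phases), put one Schrödinger factor in $L^2$ and the other in $L^\infty$ via the dispersion estimate \eqref{disp}, and integrate $\tau^{-3/2}$ to get the limiting $t^{-1/2}$ decay; this matches the paper's proof.

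One bookkeeping point does not come out as you state it: with the crude Leibniz expansion, the factor placed in $L^\infty$ may carry derivatives, so dispersion gives $\tau^{-1/2}\|\nabla^{k_i}u_+\|_{L^1}$ and your ``collected'' bound should read $C(t_0,\|\alpha_j\|_{l^{2,k}})\,\|u_+\|_{W^{1,s}}\|u_+\|_{H^k}\,\tau^{-3/2}$ rather than featuring $\|u_+\|_{L^1}$ as in the lemma. To obtain literally the stated constant one should instead estimate the quadratic factor by the bilinear (Moser/Kato--Ponce type) inequality $\|\nabla^k(fg)\|_{L^2}\lesssim \|f\|_{H^k}\|g\|_{L^\infty}+\|f\|_{L^\infty}\|g\|_{H^k}$ with $f=g=e^{i\tau\partial_x^2}u_+$, which keeps all derivatives off the $L^\infty$ factor; this is what the paper's one-line passage to $\|u_+\|_{H^k}\|e^{i\tau\partial_x^2}u_+\|_{L^\infty}$ implicitly uses. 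Since $u_+\in W^{1,s}$ is assumed and only the $t^{-1/2}$ decay and a $u_+$-dependent constant matter for the fixed point, your weaker constant is harmless, but as written the final line of your proof does not follow from the preceding step.
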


As announced, this is the limiting term for the time decay.
\begin{proof}
In this proof, we just use direct upper-bounds and dispersion estimate \eqref{disp} on the Schrödinger group.
\begin{align*}
\sup_{t_0\leq t} t^\mu \|\nabla^k J_{d}\|_{L^2} \leq& \sup_{t_0\leq t} t^\mu  \int_t^\infty \sum_{j\in \mathbb Z} (|\alpha_j| +|R_j | )\| \nabla^k [e^{i\frac{xj}2} |e^{i\tau\partial_x^2}u_+|^2 + e^{-i\frac{xj}2}(e^{i\tau\partial_x^2}u_+(x))^2 ] \|_{L^2} \frac{d\tau}\tau
\\
\leq & C(t_0,\|\alpha_j\|_{l^{2,k}} )\sup_{t_0\leq t} t^\mu  \int_t^\infty \|u_+ \|_{H^k} \| e^{i\tau\partial_x^2}u_+\|_{L^\infty} \frac{d\tau}{\tau} \\
\leq & C(t_0,\|\alpha_j\|_{l^{2,k}})\sup_{t_0\leq t} t^\mu  \int_t^\infty \|u_+ \|_{H^k} \frac{\| u_+\|_{L^1}}{\sqrt \tau}  \frac{d\tau}{\tau} \\
\leq & C(t_0,\|\alpha_j\|_{l^{2,k}} )\sup_{t_0\leq t} \frac {t^\mu}{t^{\frac12}} \|u_+\|_{L^1}\|u_+\|_{H^k}.
\end{align*}
\end{proof}
The  last term  $J_e$ is easier to control, using again Schrödinger dispersion inequality.
\begin{lemma}[Estimation of $\nabla^k J_e$] Let $J_e$ be defined by \eqref{Je}:
\[
J_e=- i \displaystyle\int_{t}^\infty e^{i(t-\tau)\partial_x^2}  |e^{i\tau\partial_x^2}u_+|^2e^{i\tau \partial_x^2} u_+ \frac{d\tau}\tau,
\]
 then, for $k\in\llbracket0,s\rrbracket$,
\[
\sup_{t_0\leq t}t^\mu\|\nabla^kJ_e\|_{L^2} \lesssim \sup_{t_0\leq t} \frac {t^\mu}{t}   \|u_+\|^2_{L^1}\|u_+\|_{H^s}
\]
\end{lemma}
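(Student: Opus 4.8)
The plan is to estimate $\nabla^k J_e$ by brute force, exactly as for $J_d$ but with an extra gain in time decay coming from the fact that the cubic term in $u_+$ carries no $A$ factor and hence no summation over $j$. First I would move $\nabla^k$ inside the Duhamel integral (it commutes with $e^{i(t-\tau)\partial_x^2}$) and apply the $L^\infty L^2$ Strichartz inequality with the admissible endpoint couple $(\infty,2)$, reducing matters to bounding
\[
\sup_{t_0\leq t} t^\mu \int_t^\infty \left\| \nabla^k\left( |e^{i\tau\partial_x^2}u_+|^2\, e^{i\tau\partial_x^2}u_+ \right) \right\|_{L^2} \frac{d\tau}{\tau}.
\]
Next I would expand $\nabla^k$ of the triple product by the Leibniz rule, placing the $L^2$ norm on one factor and $L^\infty$ norms on the other two, so that each term is controlled by $\|u_+\|_{H^k}\|e^{i\tau\partial_x^2}u_+\|_{L^\infty}^2$ (using that $\nabla^j$ commutes with the Schrödinger group and $\|\nabla^j e^{i\tau\partial_x^2}u_+\|_{L^\infty}\leq \|\nabla^j u_+\|_{L^1}/\sqrt\tau$, and bounding $\|\nabla^j u_+\|_{L^1}\lesssim \|u_+\|_{W^{1,s}}$ or simply absorbing it; here since we only want the stated bound in terms of $\|u_+\|_{L^1}$ and $\|u_+\|_{H^s}$, one keeps $\|u_+\|_{H^k}$ from the $L^2$-factor and $\|u_+\|_{L^1}$ from each of the two dispersive $L^\infty$-factors). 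Then the dispersion estimate \eqref{disp} gives $\|e^{i\tau\partial_x^2}u_+\|_{L^\infty}^2 \leq \|u_+\|_{L^1}^2/\tau$, so the integrand is $\lesssim \|u_+\|_{L^1}^2\|u_+\|_{H^s}\,\tau^{-2}$, and the $\tau$-integral produces $\lesssim \|u_+\|_{L^1}^2\|u_+\|_{H^s}/t$, whence the factor $t^\mu/t$ after taking the supremum.

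There is essentially no obstacle here: the two dispersive factors give a full $1/\tau$ decay, which together with the $1/\tau$ already present in the Duhamel weight makes the time integral converge with room to spare — this is why the lemma asserts the strong $t^{-1}$ rate rather than the critical $t^{-1/2}$ that $J_d$ produces. The only minor point to be careful about is the Leibniz expansion: when $k=s$ one cannot afford to put an $L^\infty$ norm on $\nabla^s u_+$ directly, so one arranges the derivative count so that at most $\nabla^s$ falls on the factor measured in $L^2$ and the remaining two factors carry at most $\nabla^{s-1}$ derivatives, which are still controlled in $L^\infty$ by the dispersion estimate (or, if need be, one uses a fractional Leibniz/Gagliardo–Nirenberg interpolation as in Lemma \ref{gradi}); in either case the resulting bound is of the claimed form. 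Summing the finitely many Leibniz terms and the constants only affects the implicit constant in $\lesssim$, completing the proof.
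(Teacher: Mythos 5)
Your argument is correct and follows essentially the same route as the paper: bring the norm inside the Duhamel integral using unitarity of the Schr\"odinger group, control the cubic term by $\|e^{i\tau\partial_x^2}u_+\|_{L^\infty}^2\|u_+\|_{H^s}$, apply the dispersion estimate to the two underived factors to get a $\tau^{-1}$ gain, and integrate $\tau^{-2}$ to obtain the $t^{-1}$ rate. The only point to tighten is that a naive integer Leibniz expansion puts derivatives on the $L^\infty$-measured factors and hence yields $\|\nabla^j u_+\|_{L^1}$ rather than $\|u_+\|_{L^1}$; to get the bound exactly as stated you should use the Moser-type product estimate $\left\||w|^2 w\right\|_{H^s}\lesssim\|w\|_{L^\infty}^2\|w\|_{H^s}$ (the Gagliardo--Nirenberg interpolation route you already mention as a fallback), which is what the paper's one-line proof does implicitly.
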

\begin{proof}
For this lemma, we write:
\begin{align*}
\|J_e(t)\|_{H^s}\leq&  \int_t^\infty \| |e^{i\tau\partial_x^2}u_+|^2e^{i\tau\partial_x^2}u_+ \|_{H^s} \frac{d\tau}\tau\\
&\lesssim \int_t^\infty \|e^{i\tau\partial_x^2}u_+\|^2_{L^\infty} \|e^{i\tau\partial_x^2}u_+\|_{H^s} \frac{d\tau}\tau \lesssim \frac{\|u_+\|^2_{L^1}\|u_+\|_{H^s}}t .
\end{align*}
\end{proof}
\subsection{The stability}\leavevmode\\
Combining the lemmas from the previous section, 
\begin{align}
\|\phi(v)-v_1\|_{S}\leq & \|v-v_1\|_S \left( \frac{\|\alpha_j\|_{l^{2,q}}^2}\mu +\frac{C(t_0,\|\alpha_j\|_{l^{2,s}})}{t_0^{2\gamma}(\mu+2\gamma)}+  \frac{\|u_+\|_{L^1}^2}{t_0(1+\mu)}  + \frac{ \|v-v_1\|^2_S }{t_0^{\mu} } \right. \\
 & \left.+\left(\frac{\|\alpha_j\|_{l^{2,q}}}{\mu}\right)^2+\left(\frac {C(t_0,\|\alpha_j\|_{l^{2,s}})}{(\mu+\gamma)t_0^\gamma}\right)^2+\left(\frac{\|u_+\|_{W^{1,s}}}{\sqrt t_0 (\frac12+\mu)}\right)^2+\left(\frac{\delta}{2\mu t_0^\mu}\right)^2\right) \nonumber \\
&+C(t_0, u_+,\|\alpha_j\|_{l^{2,q}}) \left(\sup_{t_0<t} \frac{t^\mu}t+\sup_{t_0<t} \frac{t^\mu}{t^\gamma}+\sup_{t_0<t} \frac{t^\mu}{t^{\frac12}}\right),
\end{align}
so we have the stability inclusion $\phi(X_\delta) \subset X_\delta $ holds for $\mu<\max(\frac12,\gamma)$, $\|\alpha_j\|_{l^{2,q}}$ and $\delta$ small and $t_0$ large enough.
\subsection{The contraction}\leavevmode\\
In order to apply the fixed point, we still have to prove that $\phi$ contracts. \par
Let $v$ and $\tilde v$ in $X_\delta$.\\ 
From
\begin{equation*}
\phi(v) - \phi (\tilde v) = \int_t^\infty e^{i(t-\tau)\partial_x^2}\left(\frac{|v|^2-2M}{ \tau}v - \frac{|\tilde{v}| ^2-2M}{\tau}\tilde{v} \right)d\tau,
\end{equation*}
yields
\begin{align*}
&\sup_{t_0\leq t}t^\mu \|\phi(v) - \phi (\tilde v) \|_{L^2} \\
\lesssim& \sup_{t_0\leq t} t^\mu \int_t^\infty \| |v|^2v - |\tilde v|^2\tilde v - 2M (v - \tilde v) \|_{L^2} \frac{d\tau}\tau\\
\lesssim&  \sup_{t_0\leq t} t^\mu \int_t^\infty (2M + \|v\|^2_{L^\infty} + \|\tilde v\|^2_{L^\infty})\|v-\tilde v\|_{L^2} \frac{d\tau}\tau\\
\lesssim&\|v-\tilde v\|_S   \left( \frac {2M}{\mu} + \sup_{t_0\leq t}t^\mu \int_t^\infty ( \|v(\tau)\|^2_{L^\infty} + \|\tilde v(\tau)\|^2_{L^\infty}) \frac{d\tau}{\tau^{1+\mu}} \right) \\
\lesssim&\|v-\tilde v\|_S   \left( \frac {2M}{\mu} +  \sup_{t_0\leq t}t^\mu \int_t^\infty (2\|v-v_1\|^2_{L^\infty}+ 2\|\tilde v-v_1\|^2_{L^\infty}+ 4 \|v_1\|^2_{L^\infty}) \frac{d\tau}{\tau^{1+\mu}} \right) \\
\lesssim& \|v-\tilde v\|_S   \left( \frac {2M}{\mu} +  \sup_{t_0\leq t} t^\mu \int_t^\infty ( 8\|A\|_{L^{\infty}}^2+ 8\frac{\|u_+\|^2_{L^1}}\tau +4\frac{\delta^2}{\tau^{2\mu}}) \frac{d\tau}{\tau^{1+\mu}} \right)\\
\lesssim& \|v-\tilde v\|_S   \left( \frac {2M}{\mu} + \frac{8\|A\|_{L^{\infty}} }\mu + \frac{8\|u_+\|^2_{L^1}}{t_0(1+\mu)} + \frac{(2\delta)^2}{t_0^{2\mu}(1+2\mu)} \right).
\end{align*}
Moreover, for $k\in \llbracket 1, s-1\rrbracket $: 
\begin{align*}
&\sup_{t_0\leq t } t^\mu \| \nabla^k(\phi(v)-\phi(\tilde v)) \|_{L^2}\\
 \leq &  \frac{ 2M}{\mu} \|v-\tilde v\|_S + \sup_{t_0\leq t}t^\mu\int_t^\infty  \left( \sum_{p=0}^k  \binom kp  \|\nabla^p (v-\tilde v)\|_{L^2}\|\nabla^{k-p}v^2\|_{L^\infty} \right. \\
&+ \left. \sum_{p=0}^k \sum_{q=0}^p \binom kp  \binom pq  \|\nabla^q(v-\tilde v)\|_{L^2}  \|\nabla^{k-p}\tilde v\|_{L^\infty}(\|\nabla^{p-q}v\|_{L^\infty} + \|\nabla^{p-q} \tilde v\|_{L^\infty} ) \right) \frac {d\tau}\tau \\
\lesssim &\|v-\tilde v\|_S \left(\frac{2M}\mu + \int_t^\infty \sum_{p=0}^k  \binom kp  \sum_{q=0}^{k-p} \binom {k-p}q \left(\|\nabla^qA\|_{L^\infty} + \frac{\|\nabla^q u_+\|}{\sqrt \tau} + \frac{\delta}{\tau^\mu} \right) \right. \\
&{ \left. \left(\|\nabla^{k-p-q}A\|_{L^\infty} + \frac{\|\nabla^{k-p-q} u_+\|}{\sqrt \tau} + \frac{\delta}{\tau^\mu} \right) \frac{d\tau}{\tau^{1+\mu}}\right)}\\
&+ \|v-\tilde v\|_S\int_t^\infty  \sum_{p=0}^k \sum_{q=0}^p \binom kp  \binom pq \left(\|\nabla^{k-p}A\|_{L^\infty} + \frac{\|\nabla^{k-p} u_+\|}{\sqrt \tau} + \frac{\delta}{\tau^\mu} \right) \\
&{2\left(\|\nabla^{p-q}A\|_{L^\infty} + \frac{\|\nabla^{p-q} u_+\|}{\sqrt \tau} + \frac{\delta}{\tau^\mu} \right) \frac{d\tau}\tau.}
\end{align*}
As for Lemma \ref{gradi}, we must avoid estimating 
\[
\sup_{t_0\leq t}t^\mu\int_t^\infty  \| v-\tilde v\|_{L^2} \|w_1\|_{L^\infty} \|\nabla^s w_2\|_{L^\infty} \frac{d\tau}\tau,
\]
by using Gagliardo--Nirenberg \eqref{GN} on  $\|\nabla^s v\|_{L^\infty}$. Therefore we rather write 
\[
\sup_{t_0\leq t}t^\mu\int_t^\infty  (\| v-\tilde v\|_{L^\infty} \| w_1\|_{L^\infty} \|\nabla^s (w_2-v_1)\|_{L^2} + \| v-\tilde v\|_{L^2} \| w_1\|_{L^\infty} \|\nabla^s v_1\|_{L^\infty} )\frac{d\tau}\tau ,
\]
for $(w_1,w_2)\in \{\tilde v,v\}^2 $.\\
To conclude, we finally use the Gagliardo--Niremberg interpolation inequality 
\[
\|v-\tilde v\|_{L^\infty}\leq \|v-\tilde v\|_{L^2}^{\frac 12}\|\nabla (v-\tilde v)\|_{L^2}^{\frac 12} \leq \frac{\|v-\tilde v\|_S}{\tau^\mu}
\]
so that 
\begin{align*}
&\sup_{t_0\leq t}t^\mu\int_t^\infty  (\| v-\tilde v\|_{L^\infty} \| w_1\|_{L^\infty} \|\nabla^s (w_2-v_1)\|_{L^2} + \| v-\tilde v\|_{L^2} \| w_1\|_{L^\infty} \|\nabla^s v_1\|_{L^\infty} )\frac{d\tau}\tau\\
\leq &\sup_{t_0\leq t}t^\mu \|v-\tilde v\|_S \int_t^\infty \|w_1\|_{L^\infty}\left(\frac{\|w_2\|_S}{\tau^\mu} +  \|\nabla^s A\|_{L^\infty} + \frac{\nabla^ku_+\|_{L^1}}{\sqrt \tau} \right)\frac{d\tau}{\tau^{1+\mu}}.
\end{align*}
Putting everything together, we have that
\[
\|\phi(v)-\phi(\tilde v)\|_S\leq \|v-\tilde v\|_S C(t_0,\|\alpha_j\|_{l^{2,q}},u_+) ,
\]
and $\phi$ contracts for $\|\alpha_j\|_{l^{2,q}}$ and $\delta$ small and $t_0$ large enough with respect to $\|\alpha_j\|_{l^{2,q}}$ and $u_+$, and solutions exists on $[t_0,\infty)$. This terminates the first part of existence of solutions of Theorem \ref{stab}.

\subsection{End of the proof: estimates  \eqref{decayschr0} and \eqref{decay0} }\label{fin}\leavevmode\\

Let $v$ be the solution of \eqref{NLS} on $[t_0,\infty)$ constructed in Section \ref{proof}. Note that we also dispose of the following estimate \eqref{decaynls} on $v$: 
\begin{equation}\label{decayv}
\forall k \in \llbracket 0,s \rrbracket , \quad \|\nabla^k(v-v_1)(t)\|_{L^2}=\mathcal O (t^{-\frac12^+}).
\end{equation}
First we set $ T_1 = \frac 1{t_0}$ and use the pseudo-conformal transformation \eqref{PCTran} on $v$ to obtain $\psi$, a solution of \eqref{schro} on $(0,T_1]$ that satisfies:
\[
v(t,x) = \mathcal T (\psi)(t,x)= \frac{e^{i\frac{x^2}{4t}}}{\sqrt t}\overline{\psi}\left(\frac1t,\frac xt\right).
\]
Since $v_1$ is defined by \eqref{defv1}, we have:
\[
v_1(t,x)=\mathcal T  ( e^{2iM \ln t} u_{\{\alpha_j\}} (t,x))  +e^{it\partial_x^2}u_+(x),
\]
and estimate \eqref{decayv} becomes:
\[
\forall k\in \llbracket 0,s\rrbracket,\quad \left \| \nabla^k \left[\mathcal T \left( \psi(t,x)-e^{2iM \ln t} u_{\{\alpha_j\}} (t,x) \right)- e^{it\partial_x^2}u_+(x) \right]\right\|_{L^2}=\mathcal O (t^{-\frac12^+}).
\]
Then we normalize back the solutions $\psi$ of \eqref{schro} to obtain an estimate on the solutions $u$ of \eqref{eq} on $(0,T_1]$. As $\mathcal T$ preserves the $L^2$ norm, we use the fact that, for $J$ defined in the introduction by \eqref{defJ} we have:
\[
\nabla^k \mathcal T = \mathcal T (-i)^k J^k,
\]
to obtain estimate \eqref{decayschro}:
 \[
\forall k\in \llbracket 0,s\rrbracket,\quad \left\| (-i)^kJ^k \left[u (t,x)-u{\{\alpha_j\}} (t,x)\right] - e^{-2iM\ln t} \mathcal T \left(e^{it\partial_x^2}\nabla^ku_+(x)\right) \right\|_{L^2}=\mathcal O (t^{\frac12^-}).
\]
Finally, when $\nabla^k u_+\in (x^4L^2(\mathbb R))$ for all $k\in \llbracket 0,s\rrbracket$, the $L^2$ estimate \eqref{decayl2} comes from Schrödinger evolution properties applied to the perturbation and has already been done in section 2.2 of \cite{BV1}:
\begin{equation}
\left\| \mathcal T \left(e^{it\partial_x^2}\nabla^ku_+(x) \right)  - \left(-i\frac x2\right)^k \widehat{\overline {u_+}}\left( -\frac x2\right)  \right\|_{L^2}=\mathcal O(t^{\frac12^-} ).
\end{equation}
The last estimate \eqref{decay0} is exactly estimate \eqref{decayl2} with $k=0$ and Theorem \ref{stab} is proven.

\bibliographystyle{aaai-named}

\begin{thebibliography}{99999}

 \bibitem{BV1} V.~Banica and L.~Vega, 
{\it On the stability of a singular vortex dynamics, }
Comm. Math. Phys., 286 (2009), 593-627.

\bibitem{banica2012scattering} V.~Banica and L.~Vega,
{\it Scattering for 1D cubic NLS and singular vortex dynamics, } 
J. Eur. Math. Soc., 14 (2012), 209–253.

\bibitem{banica13} V.~Banica and L.~Vega, 
{\it Stability of the self-similar dynamics of a vortex filament, }
Arch. Ration. Mech. Anal., 210 (2013), 673-712.

\bibitem{banica2015initial} V.~Banica and L.~Vega, 
 {\it The initial value problem for the binormal flow with rough data, }
Ann. Sci. Ec. Norm. Sup\'er.,  48 (2015), 1421–1453.
 
\bibitem{BV5}   V.~Banica and L.~Vega, 
{\it Evolution of polygonal lines by the binormal flow, }
Ann. PDE., 6 (2020), 1-53.

\bibitem{bourgain94periodic} J.~Bourgain, 
{\it Periodic nonlinear Schrödinger equation and invariant measures,}
Comm. Math. Phys., 166 (1994),  1-26.
 
  \bibitem{carles2017norm} R.~Carles and T.~Kappeler, 
 {\it Norm-inflation with infinite loss of regularity for periodic NLS equations in negative Sobolev spaces, }
 Bull. Soc. Math. France, 145 (2017), 623–642.

\bibitem{cazenave1990cauchy} T.~Cazenave and F.B.~Weissler,
 {\it The Cauchy problem for the critical nonlinear Schr{\"o}dinger equation in $H^s$, }
Nonlinear Anal.,  14 (1990),  807-836.

\bibitem{christ2003asymptotics} M.~Christ, J.~Colliander and T.~Tao,
 {\it Asymptotics, frequency modulation, and low regularity ill-posedness for canonical defocusing equations, }
Amer. J. Math.,  123 (2003), 1235-1293.

 \bibitem{christ2007power}M.~Christ, 
 {\it Power series solution of a nonlinear Schrödinger equation, Mathematical aspects of
nonlinear dispersive equations, }
Ann. of Math. Stud., 163 (2007), 131-155.

\bibitem{darios1906motion} L.S.~Da Rios, 
{\it On the motion of an unbounded fluid with a vortex filament of any shape, }
 Rend. Circ. Mat. Palermo, 22 (1906), 117–135.
 
  \bibitem{ginibre1979class} J.~Ginibre and G.~Giorgio,
 {\it On a class of nonlinear Schr{\"o}dinger equations. I. The Cauchy problem, general case, }
 J. Funct. Anal., 32 (1979) 1-32.

\bibitem{grunrock2005bi}A.~Grünrock,
{\it Bi- and trilinear Schrödinger estimates in one space dimension with applications to
cubic NLS and DNLS, }
 Int. Math. Res. Not., (2005), 2525–2558.
 
\bibitem{harrop2020} B.~Harrop-Griffiths, R.~Killip, R. and M.~Visan,
{\it Sharp well-posedness for the cubic NLS and mKdV in $ H^ s (\mathbb R) $,}
  Forum. Math. Pi 12 (2024), paper No e6, 86pp.

\bibitem{hasimoto1972soliton} H.~Hasimoto, 
{\it A soliton in a vortex filament, }
J. Fluid Mech., 51 (1972), 477–485.

\bibitem{kenig2001ill}C.~Kenig, G.~Ponce and L.~Vega, 
{\it On the ill-posedness of some canonical non-linear dispersive
equations, }
Duke Math. J., 106 (2001), 716–633.

\bibitem{killip2018low}R.~Killip, M.~Visan and X.~Zhang,
{\it Low regularity conservation laws for integrable PDE, }
Geom. Funct. Anal., 28 (2018), 1062-1090.

\bibitem{kishimoto2009wellposed}N.~Kishimoto, 
{\it Well-posedness of the Cauchy problem for the Korteweg-de-Vries equation at the
critical regularity, }
Differential Integral Equations, 22 (2009), 447–464.

\bibitem{koch2018conserved}H.~Koch and D.~Tataru,
{\it Conserved energies for the cubic NLS in 1-d, }
Duke Math. J., 167 (2018), 3207-3313.

\bibitem{lebo1988} J.L.~Lebowitz, H.A.~Rose and E.R.~Speer, 
{\it Statistical dynamics of the nonlinear Schrödinger equation, }
J. Stat. Physics, 50 (1988),  657-687.

\bibitem{oh2017remark}T.~Oh, 
{\it A remark on norm inflation with general initial data for the cubic nonlinear Schrödinger
equations in negative Sobolev spaces, }
Funkcial. Ekvac., 60 (2017), 259–277.

\bibitem{vargas2001global}A.~Vargas and L.~Vega, 
{\it Global well-posedness of 1D cubic nonlinear Schrödinger equation for data with
infinity $L^2$ norm, } 
J. Math. Pures Appl., 80 (2001), 1029–1044.

 \end{thebibliography}

\end{document}